\documentclass[11pt,oneside]{amsart}
\usepackage[foot]{amsaddr}
\usepackage{bm, calc,geometry, verbatim, graphicx, amssymb, color, mathabx, mathtools, enumitem, bm, mathrsfs, amsmath, amsthm, ulem, xspace,tabularx,leftindex}
\usepackage[usenames,dvipsnames,svgnames,table]{xcolor}
\usepackage[pdftex,bookmarks,colorlinks,breaklinks]{hyperref}  
\usepackage{rotating}
\usepackage{adjustbox}
\usepackage{blindtext}
\usepackage{relsize}

\hypersetup{linkcolor=blue,citecolor=red,filecolor=dullmagenta,urlcolor=darkblue} 
\newtheorem{theorem}{Theorem}[section]
\newtheorem{corollary}[theorem]{Corollary}
\newtheorem{lemma}[theorem]{Lemma}

\newtheorem{question}[theorem]{Problem}

\newcommand\mult{\operatorname{\textup{{\fontfamily{ptm}\selectfont mult}}}}
\newcommand\dg{\operatorname{\textup{{\fontfamily{ptm}\selectfont deg}}}}

\newcommand\brho{\operatorname{\boldsymbol{\rho}}}

\newcommand\roundup[1]{\left\lceil#1\right\rceil}
\newcommand\rounddown[1]{\left\lfloor#1\right\rfloor}

      \makeatletter
      \def\@setcopyright{}
      \def\serieslogo@{}
      \makeatother      

\begin{document}
\author{Amin  Bahmanian}
   \address{Department of Mathematics,
  Illinois State University, Normal, IL USA 61790-4520}

   \author{Anna Johnsen-Yu}
  \address{Department of Mathematics and Statistics,  Georgia State University, Atlanta, GA USA 30303-2918}
  \address{Department of Mathematics, Vanderbilt University, Nashville, TN USA 37240-0001}

\title[Embedding Equitable Rectangles]{Embedding  Equitable Rectangles}
\dedicatory{In memory of Charles Curtis Lindner (1938 --  2023)}

\begin{abstract}
Completing partial Latin squares is NP-complete. Ryser characterized precisely when an \(r\times s\) Latin rectangle can be completed to an \(n\times n\) Latin square. We extend this theorem to a broader setting. Let \(M\) be an \(n\times n\) array whose top-left \(r\times s\) subarray is filled with symbols from \(\{1,2,\dots,k\}\), where \(1\le k\le n^2\). We determine necessary and sufficient conditions under which the remaining cells of \(M\) can be filled so that each symbol \(\ell\in\{1,2,\dots,k\}\) appears exactly \(\rho_\ell\) times in total, while the numbers of occurrences of each symbol in any two rows and in any two columns differ by at most one.

Equivalently, our result characterizes when a partial edge-coloring of \(K_{r,s}\) can be extended to an edge-coloring of \(K_{n,n}\) in which each color class is spanning and almost regular, with prescribed color-class sizes. In this sense, our theorem generalizes Baranyai's construction of almost-regular colorings of complete uniform multipartite hypergraphs, restricted to the bipartite case.

Our theorem unifies and generalizes several classical results. When \(s=k=\rho_1=\cdots=\rho_k=n\), it reduces to Hall's theorem, and when \(k=\rho_1=\cdots=\rho_k=n\), it yields Ryser's completion theorem for Latin rectangles. Additional special cases recover results of Goldwasser, Hilton, Hoffman, and Özkan, as well as a theorem of Bahmanian. Thus, our work may be viewed as a common generalization of these completion theorems and Baranyai's theorem.
\end{abstract}

   \subjclass[2010]{05B15, 05C70, 05C65, 05C15}
   \keywords{Equitable rectangles, Latin Squares, Embedding, Ore's Theorem, Ryser's Theorem, Edge-coloring}

   \maketitle   
\section{\normalfont{Introduction}}

Throughout this paper, for $x,y\in\mathbb{R}$, $x\approx y$ means $\rounddown{y}\leq x \leq \roundup{y}$, 
 \begin{align*}
    &&
    r,s, n,k\in \mathbb N,
    &&
    \min\{r,s\}< n,
    &&
    \max\{r,s\}\leq n,
    &&
    1\leq k\leq n^2,
    &&
    [k]:=\{1,2,\dots, k\},
    &&
\end{align*}
and  $\brho$ is a $k$-tuple of integers satisfying the following conditions. 
 \begin{align*}
    &&
    \brho=(\rho_1,\dots,\rho_k),
    &&
    1\leq \rho_1,\dots,\rho_k\leq n^2,
    &&
    \displaystyle\sum\nolimits_{\ell\in [k]} \rho_\ell= n^2.
    &&
  \end{align*}
In an $r\times s$ (multi-)array $M$ on a set $[k]$ of {\it symbols}, each cell  contains an element (multisubset, respectively) of $[k]$ for $i\in[r], j\in [s]$, where $[r], [s]$ are the set of rows and columns, respectively. 
For $\ell\in [k]$, $i\in [r]$, and $j\in [s]$, $|M_\ell|$, $|M_\ell^i|$, and $|\leftindex^j M_\ell|$ are the number of occurrences of $\ell$ in $M$, in row $i$ of $M$, and in column $j$ of $M$, respectively. In a multi-array $M$, $\,^{j}M^{i}$ denotes the multiset in cell $(i,j)$ and $\lvert\,^{j}M^{i}\rvert$ its cardinality; if $i$ or $j$ exceeds the dimensions of $M$, we set $\lvert\,^{j}M^{i}\rvert = 0$.

An $r \times s$ {\it equitable $\brho$-Latin rectangle} $M$ is  an $r\times s$  array on $[k]$ such that for $\ell\in [k], i\in [r], j\in [s]$, 
\[
\begin{aligned}
    |M_\ell^i| &\le \roundup{\dfrac{\rho_\ell}{n}}, &
    |\leftindex^j M_\ell| &\le \roundup{\dfrac{\rho_\ell}{n}}, &
    |M_\ell| &\le \rho_\ell, \\
    |M_\ell^i| &\ge \rounddown{\dfrac{\rho_\ell}{n}} \text{ if } r=n, &
    |\leftindex^j M_\ell| &\ge \rounddown{\dfrac{\rho_\ell}{n}}  \text{ if } s=n, &
    |M_\ell| &= \rho_\ell \text{ if } r=s=n.
\end{aligned}
\]
If $r=s=n$, then  each symbol \(\ell\) appears approximately equally often in every row and column of $M$, with total occurrence \(\rho_\ell\), and $M$ is an {\it equitable $\brho$-Latin square}.  
If \(\rho_\ell\le n\) for  \(\ell\in[k]\), then an equitable \(\brho\)-Latin square (respectively, rectangle) is simply called a {\it \(\brho\)-Latin square} (respectively, {\it \(\brho\)-Latin rectangle}). In the special case \(\rho_\ell=n\) for  \(\ell\in[k]\), we recover the usual notions of a {\it Latin square} and {\it Latin rectangle}.

 A Latin square is equivalent to a quasigroup. Indeed, if \(L\) is an
\(n\times n\) Latin square on a set \(Q\), we define a binary operation
$\circ:Q\times Q\to Q$
by
$x\circ y=L(x,y)$.
The Latin property implies that for  \(a,b\in Q\), the equations
\[
a \circ x=b
\qquad\text{and}\qquad
y \circ a=b
\]
have unique solutions \(x,y\in Q\). More generally, an equitable $\brho$-Latin square corresponds to an {\it equitable $\brho$-quasigroup},  a triple $(Q,S,\circ)$,
where $|Q|=n$, $S=[k]$, and $\circ:Q\times Q\to S$ such that for  $a\in Q$ and $b\in S$, the equation
\[
x \circ y=b
\]
has exactly $\rho_b$ ordered solutions, and each of the equations
\[
x \circ a=b
\qquad\text{and}\qquad
a \circ y=b
\]
has either $\lfloor \rho_b/n \rfloor$ or $\lceil \rho_b/n \rceil$ solutions.

We characterize precisely when an $r\times s$ equitable $\brho$-Latin rectangle can be completed to an $n\times n$ equitable $\brho$-Latin square. This describes when a partial equitable $\brho$-quasigroup can be embedded into a full one. Before stating our main result, Theorem~\ref{main2}, we introduce the necessary background and notation. The surveys of Lindner \cite{MR379211} and Rodger \cite{MR1275861} contain broad accounts of the theory of partial Latin squares, including many open problems that continue to attract attention. Here, we restrict ourselves to recalling several classical results relevant to our work.

The problem of completing partial Latin squares is NP-complete \cite{MR739595}. In 1945, Hall established the following result.
\begin{theorem} \cite{MR13111} \label{thm:hall}
    Any $r\times n$ Latin rectangle can be extended to an $n\times n$ Latin square.
\end{theorem}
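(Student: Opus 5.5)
We argue by induction on the number of rows: it suffices to show that any $r\times n$ Latin rectangle $L$ on $[n]$ with $r<n$ can be extended to an $(r+1)\times n$ Latin rectangle. Repeating this step $n-r$ times yields an $n\times n$ Latin square.

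To add one row, form the bipartite graph $G$ with parts $C=[n]$ (the columns) and $S=[n]$ (the symbols). Join column $j$ to symbol $\ell$ exactly when $\ell$ does not yet occur in column $j$ of $L$. A new row $(r+1)$ that keeps the array Latin is the same thing as a perfect matching of $G$. Indeed, column $j$ receives a symbol $\ell$ missing from that column, and each symbol is used exactly once in the new row, so the row itself has no repeated symbol.

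We next check that $G$ is regular. Each column of $L$ contains $r$ distinct symbols, so every column vertex has degree $n-r$. Each symbol $\ell$ appears exactly once in each of the $r$ rows of $L$, because every row of an $r\times n$ Latin rectangle on $n$ symbols is a permutation of $[n]$. These $r$ occurrences lie in $r$ distinct columns, since the columns are also Latin. Hence $\ell$ is missing from exactly $n-r$ columns, and every symbol vertex also has degree $n-r$. So $G$ is $(n-r)$-regular with $n-r\ge 1$.

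The one substantive step is to show that a $d$-regular bipartite graph with $d\ge1$ has a perfect matching. We use Hall's marriage condition. For $X\subseteq C$, the $d|X|$ edges leaving $X$ all land in $N(X)$. The vertices of $N(X)$ absorb at most $d|N(X)|$ edges in total, so $d|X|\le d|N(X)|$, which gives $|N(X)|\ge|X|$. Hall's condition therefore holds, and a matching saturating $C$ exists. Because $|C|=|S|$, this matching is perfect.

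Placing the matched symbols in row $r+1$ completes the inductive step, and the theorem follows.
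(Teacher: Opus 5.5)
Your proof is correct. It is the classical argument: induct on rows, and obtain each new row as a perfect matching in the $(n-r)$-regular column--symbol graph, with the perfect matching coming from Hall's marriage condition by double counting.

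The paper does not prove Theorem~\ref{thm:hall} directly. It cites the result and recovers it as the case $\rho_1=\cdots=\rho_n=n$ of Corollary~\ref{cor:ndivrhoihall}, and hence of Theorem~\ref{main2}, whose sufficiency proof uses Ore's $f$-factor theorem and Nash-Williams' laminar-family lemma.

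Specialized to this case, that machinery reduces to essentially your argument. Every symbol is forced, so the Ore step is vacuous. Lemma~\ref{claim1} then just places into the amalgamated cell $(r+1,j)$ the $n-r$ symbols missing from column $j$. Lemma~\ref{claim2} peels off one row at a time. Each use of Nash-Williams' lemma there involves the partition of the last row's cell--symbol pairs by column and the partition by symbol, all of whose blocks have size exactly $n-r-v+1$. It therefore amounts to choosing a perfect matching in an $(n-r-v+1)$-regular bipartite graph, which is precisely your step.

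So your version is the self-contained, elementary one: it needs only the marriage theorem and no external lemmas. The paper's amalgamate-then-detach framework, with Nash-Williams in place of the regular-matching fact, is what lets it handle the $\lfloor \rho_\ell/n\rfloor$ versus $\lceil \rho_\ell/n\rceil$ balancing, free symbols, and partial rectangles with $s<n$. None of these arise in Hall's setting.
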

Ryser later generalized Hall's theorem as follows.
\begin{theorem} \cite{MR42361} \label{thm:ryser}
    Any $r\times s$ Latin rectangle $M$ can be extended to an $n\times n$ Latin square if and only if $|M_\ell|\geq r+s-n$ for $\ell\in[n]$.
\end{theorem}

Building on previous generalizations of Hall's and Ryser's theorems for $\brho$-Latin rectangles \cite{MR3280683,BAHMANIAN2022105632}, 
our main result extends these classical combinatorial theorems to the more general setting of equitable $\brho$-Latin rectangles, 
capturing both the global symbol counts and their approximately uniform distribution across rows and columns. 
Corollary~\ref{main1intro} illustrates the resulting generalization of Hall's theorem, 
while the comment before Corollary ~\ref{cor:ndivrhoiryser} explains how Theorem~\ref{main2} recovers Ryser's theorem in this context.

In order to formulate our main theorem, we first introduce the concepts of a saturated symbol and a feasible sequence. 
A symbol $\ell \in [k]$ is called \textit{free} if $n \nmid \rho_\ell$, and \textit{forced} otherwise.  
We say that $\ell$ is \textit{saturated} in a row $i \in [r]$ or column $j \in [s]$ of $M$ if it appears  $\lceil \rho_\ell / n \rceil$ times in that row or column; equivalently, we also say that the row $i$ or column $j$ of $M$ is \textit{$\ell$-saturated}. For a free symbol $\ell$, let $\eta_I(\ell)$ and $\eta_J(\ell)$ be the number of $\ell$-unsaturated rows in $I \subseteq [r]$ and $\ell$-unsaturated columns in $J \subseteq [s]$, respectively; for forced symbols, we set $\eta_I(\ell) = \eta_J(\ell) = 0$. 
For $i\in[r], j\in[s]$, we abbreviate $\eta_{\{i\}}(\ell)$ and $\eta_{\{j\}}(\ell)$ to $\eta_i(\ell)$ and $\eta_j(\ell)$, respectively, and for $K\subseteq[k]$, we define $\eta_K(i)$ and $\eta_K(j)$ to be the number of free symbols $\ell\in K$ that are unsaturated in row $i$ and column $j$, respectively.
For notational convenience, we typically use \(r\) and \(s\); when it is helpful to combine two equations, we write \(r_1 = r\) and \(r_2 = s\). A sequence $\{(a_{\ell1},a_{\ell2})\}_{\ell\in[k]}$ of non-negative integer pairs is
\textit{feasible} (with respect to an $r_1 \times r_2$ equitable $\boldsymbol{\rho}$-Latin rectangle $M$) if for $t=1,2$ we have
\[
\left\{
\begin{aligned}
& a_{\ell t} \;\ge\; \rho_\ell - r_t \left\lceil \frac{\rho_\ell}{n} \right\rceil
  - (n-r_t)\left\lfloor \frac{\rho_\ell}{n} \right\rfloor,\\
& a_{\ell t} 
  \;\le\; \min\Big\{n\roundup{\dfrac{\rho_\ell}{n}}, \rho_\ell + \eta_{[r_t]}(\ell)\Big\} - r_t\roundup{\dfrac{\rho_\ell}{n}} - (n-r_t) \rounddown{\dfrac{\rho_\ell}{n}},
  \\[1.5mm]
& a_{\ell 1} + a_{\ell 2} 
  \;\ge\; \rho_\ell - |M_\ell| 
  - (2n - r_1 - r_2) \left\lfloor \frac{\rho_\ell}{n} \right\rfloor, \\[1.5mm]
& \sum_{\ell \in [k]} a_{\ell t} 
  \;=\; (n-r_t) \Bigl( n - \sum_{\ell \in [k]} \left\lfloor \frac{\rho_\ell}{n} \right\rfloor \Bigr).
\end{aligned}
\right.
\]
Let
\[
f_t(\ell) =
r_t \left\lceil \dfrac{\rho_\ell}{n} \right\rceil
+ (n-r_t) \left\lfloor \dfrac{\rho_\ell}{n} \right\rfloor
- \rho_\ell + a_{\ell t}
\qquad t=1,2.
\]

\begin{theorem}\label{main2}
An $r_1\times r_2$ equitable $\brho$-Latin rectangle $M$ extends to an $n\times n$ equitable $\brho$-Latin square if and only if there exists a feasible sequence $\{(a_{\ell1}, a_{\ell2})\}_{\ell\in[k]}$  such that for $t=1,2$ we have
\begin{align}
\rho_\ell - |M_\ell|
&\;\ge\; (n-r_t) \rounddown{\dfrac{\rho_\ell}{n}}
&\quad &\text{for } \ell \in [k], 
\label{MainNcond} \\[1em]
n|Z| 
&\;\ge\; \sum_{\ell \in [k]} \Big(|Z|\Bigl\lceil \frac{\rho_\ell}{n} \Bigr\rceil -\min \bigl\{ f_t(\ell),\, \eta_Z(\ell) \bigr\}\Big)
&\quad &\text{for }  Z \subseteq [r_t].
\label{ncond2R}
\end{align}
\end{theorem}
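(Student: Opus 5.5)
The plan is to reduce the completion to two successive bipartite degree-constrained factor problems, following the classical two-stage proof of Ryser's theorem (first complete the rows to an $r_1\times n$ rectangle, then complete the columns). Interpret $a_{\ell 1}$ as the number of extra occurrences of $\ell$, beyond the forced minimum $(n-r_1)\lfloor\rho_\ell/n\rfloor$, that will appear in the new rows $[n]\setminus[r_1]$. Then $f_1(\ell)$ is the slack of $\ell$ in the rows $[r_1]$: it is the number of those rows that may still receive one more copy of $\ell$ beyond what is needed to reach $\lceil\rho_\ell/n\rceil$ saturation. The columns and $a_{\ell 2}$, $f_2(\ell)$ play the symmetric role.

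\emph{Necessity.} Given a completion $L$, define $a_{\ell t}$ from the counts of $\ell$ in the added rows ($t=1$) and added columns ($t=2$) minus the floor baseline. The first two feasibility inequalities come from the per-row bounds $\lfloor\rho_\ell/n\rfloor\le|L^i_\ell|\le\lceil\rho_\ell/n\rceil$ together with the total count $\rho_\ell$; the $\eta$ term accounts for unsaturated rows that can absorb at most one more copy each. The third inequality comes from inclusion--exclusion over the region outside $[r_1]\times[r_2]$. The sum identity comes from counting cells: each new row contains $n$ cells, and $\sum_\ell\lfloor\rho_\ell/n\rfloor$ of them are used by baselines. Condition \eqref{MainNcond} holds because the $n-r_t$ new lines each contain at least $\lfloor\rho_\ell/n\rfloor$ copies of $\ell$ that lie outside $M$. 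For \eqref{ncond2R}, take any $Z\subseteq[r_1]$. The rows of $Z$ restricted to the columns $[n]\setminus[r_2]$ have $|Z|(n-r_2)$ cells, and these must be filled so that each row reaches its required counts. Symbol $\ell$ must fill $|Z|\lceil\rho_\ell/n\rceil$ minus the copies already present in $M$, minus at most $\min\{f_1(\ell),\eta_Z(\ell)\}$ deficient rows, which are the rows allowed to finish at the floor. Rewriting this count of $n|Z|$ cells gives \eqref{ncond2R}.

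\emph{Sufficiency.} In the first stage, construct the $r_1\times n$ extension row-wise. Form a bipartite multigraph with rows $i\in[r_1]$ on one side and symbols $\ell$ on the other. Each row $i$ needs $n-r_2$ new entries. Each symbol $\ell$ must receive $\rho_\ell-|M_\ell|-(n-r_1)\lfloor\rho_\ell/n\rfloor-a_{\ell1}$ new entries in $[r_1]$, with at most $\lceil\rho_\ell/n\rceil-|M^i_\ell|$ per row, and at least $\lfloor\rho_\ell/n\rfloor-|M^i_\ell|$ per row. The existence of such an integral $(g,f)$-factor is decided by an Ore/Gale--Ryser type criterion, which I would prove via max-flow min-cut (Ore's theorem on bipartite subgraphs with prescribed degrees). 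The cut conditions over row sets $Z$ reduce, after optimizing over the symbol side, exactly to \eqref{ncond2R} for $t=1$, with the $\min\{f_1(\ell),\eta_Z(\ell)\}$ term arising from the choice of which symbols to cut. Each row's new entries are then distributed among the columns $[n]\setminus[r_2]$ by edge-coloring a bipartite multigraph (K\"onig's theorem), which ensures that every column receives its entries in a balanced way. The requirement that every column of the rectangle stays within $\lceil\rho_\ell/n\rceil$ uses an equitable edge-coloring (de Werra) of the bipartite graph between columns and symbols, which keeps column counts within floor/ceiling. In the second stage, the $r_1\times n$ rectangle is completed to $n\times n$ by the column-wise version, using $t=2$. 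Here $a_{\ell1}$ and $a_{\ell2}$ guarantee that the residual totals for the new rows are exactly $(n-r_1)\lfloor\rho_\ell/n\rfloor+a_{\ell1}$. For $r_1=n$, the last rows follow by the equitable-coloring form of Hall's theorem (Corollary~\ref{main1intro}), or directly by an equitable edge-coloring of $K_{n-r_1,n}$ with prescribed color class sizes. The third feasibility inequality ensures that the demands of the two stages are compatible.

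The main obstacle is the first stage's column balancing. Adding entries row by row through a degree-constrained factor fixes the row counts but not the column counts in the columns $[n]\setminus[r_2]$. I would first try to handle this with an amalgamation/detachment argument in the Hilton--Rodger style: treat all new columns as a single amalgamated vertex with multiplicity $n-r_2$, solve the factor problem there, and then detach it into $n-r_2$ columns while keeping every symbol within floor/ceiling on each column. Checking that the detachment respects the $\eta$-saturation constraints of the existing columns $[r_2]$ is the delicate point.
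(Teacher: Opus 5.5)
Your necessity sketch follows the paper's counting. Your first-stage $(g,f)$-factor is equivalent to the paper's $f_1$-factor of $\Gamma_1$: in each row you choose which unsaturated free symbols finish at $\lfloor\rho_\ell/n\rfloor$. So \eqref{ncond2R} for $t=1$ is exactly Ore's condition there.

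\textbf{The gap is the two-stage architecture of the sufficiency proof.} Stage 2 needs the $r_1\times n$ rectangle $M'$ from stage 1 to satisfy the $r\times n$ extension condition for \emph{every} set of columns of $M'$, including the new columns $r_2+1,\dots,n$. Hypothesis \eqref{ncond2R} with $t=2$ only concerns $Z\subseteq[r_2]$. A K\"onig/de Werra distribution that looks only at rows $[r_1]$ does not supply the missing condition, as the following example shows.
\begin{itemize}
\item Take $n=5$, symbols $1,2,3$ with $\rho=5$, and $A,B,C,D,E$ with $\rho=2$.
\item Let $M$ have rows $(2,3,A)$, $(3,2,B)$, $(1,E,3)$, $(E,1,2)$.
\item $M$ extends to the square with rows $(2,3,A,1,C)$, $(3,2,B,D,1)$, $(1,E,3,2,B)$, $(E,1,2,A,3)$, $(C,D,1,3,2)$. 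So all hypotheses hold with the sequence read off from this square.
\item Your stage-1 factor may hand rows $1,\dots,4$ the new multisets $\{1,C\},\{1,D\},\{2,B\},\{3,A\}$.
\item Now use the $2$-edge-colouring that puts $1,D,2,3$ into column $4$ and $C,1,B,A$ into column $5$. It is proper and equitable, and every column count is at most $\lceil\rho_\ell/n\rceil$.
\item Yet column $5$ misses both forced symbols $2$ and $3$ with only one cell left, so $M'$ cannot be completed.
\end{itemize}
So the difficulty is not the $\eta$-constraints of the old columns $[r_2]$, which stage 1 never touches. It is the \emph{totals} of the new columns, which depend on what the rows below $r_1$ contribute. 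Amalgamating only the new columns within rows $[r_1]$, as in your last paragraph, has the same defect.

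\textbf{The missing idea is to fix the whole border before splitting anything,} as in Lemma~\ref{claim1}.
\begin{itemize}
\item $\Theta_1$ fills the amalgamated cells $(i,r_2+1)$.
\item $\Theta_2$ fills the cells $(r_1+1,j)$.
\item The corner cell receives $a_{\ell1}+a_{\ell2}-\rho_\ell+|M_\ell|+(2n-r_1-r_2)\lfloor\rho_\ell/n\rfloor$ copies of $\ell$. This is nonnegative exactly by the third feasibility inequality, and the total is $(n-r_1)(n-r_2)$ by the sum identities.
\end{itemize}
This is where $a_{\ell2}$ and the third inequality genuinely enter; your outline only asserts that they make the two stages compatible.

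The amalgamated column then holds $(n-r_2)\lfloor\rho_\ell/n\rfloor+a_{\ell2}\approx(n-r_2)\rho_\ell/n$ copies of $\ell$. Split off one column (or row) at a time with Nash-Williams' lemma, using the laminar families ``copies of $\ell$'' and ``entries of a given cell'' (corner cell included). This keeps every cell size exact and every line's total $\approx\rho_\ell/n$ (Lemma~\ref{claim2}).

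Your equitable-colouring step would also work if you added the corner block as one more vertex of degree $(n-r_1)(n-r_2)$ and coloured with $n-r_2$ colours, then split the amalgamated row the same way with $n-r_1$ colours. In the example this forces each new column to receive exactly one $2$ and one $3$ overall.
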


Readers may find the following special case of Theorem \ref{main2} appealing due to its simplicity and independence from feasible sequences. 
\begin{corollary}\label{main1intro}
    An $r \times n$ equitable $\boldsymbol{\rho}$-Latin rectangle $M$ extends to an $n \times n$ equitable $\boldsymbol{\rho}$-Latin square  if and only if the following conditions are met.
\begin{align*}
     &\dfrac{\rho_\ell - |M_\ell|}{n-r} \approx \dfrac{\rho_\ell}{n} &\text{for } \ell\in [k],\\
     &n|J| \geq \sum_{\ell\in[k]}\Big(|J|\roundup{\dfrac{\rho_\ell}{n}} - \min\left\{n\roundup{\dfrac{\rho_\ell}{n}} - \rho_\ell, \eta_J(\ell)\right\} \Big) &\text{for } J\subseteq[s].
\end{align*}
\end{corollary}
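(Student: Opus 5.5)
The plan is to specialize Theorem~\ref{main2} to $r_1=r$, $r_2=s=n$ and show that the two conditions of the corollary are exactly what is left once the feasible sequence is forced or chosen canonically. Two quantities drive everything. For $t=2$ we have $n-r_2=0$, so the last feasibility condition gives $\sum_\ell a_{\ell2}=0$, hence $a_{\ell2}=0$ for every $\ell$ and $f_2(\ell)=n\roundup{\rho_\ell/n}-\rho_\ell$. For the rows, every row of $M$ has all $n$ cells filled. I will read the equitability constraints for a full row as giving $\rounddown{\rho_\ell/n}\le |M_\ell^i|\le\roundup{\rho_\ell/n}$. I expect the main obstacle to be pinning down this bookkeeping from the definition, since it is stated in terms of $r=n$ and $s=n$. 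Granting it, counting the $n$ cells of row $i$ symbol by symbol gives the identity
\[
n=\sum_{\ell}|M_\ell^i|=\sum_{\ell}\roundup{\rho_\ell/n}-\eta_{[k]}(i),
\]
and summing over rows gives $r\roundup{\rho_\ell/n}-|M_\ell|=\eta_{[r]}(\ell)$ for every $\ell$.

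\emph{Necessity.} Suppose $M$ extends. Theorem~\ref{main2} supplies a feasible sequence, and as above $a_{\ell2}=0$. Condition \eqref{ncond2R} with $t=2$ and $Z=J\subseteq[n]$ is then literally the second condition of the corollary. Condition \eqref{MainNcond} with $t=1$ gives $\rho_\ell-|M_\ell|\ge(n-r)\rounddown{\rho_\ell/n}$. For the upper half of the $\approx$ statement, combine the third feasibility inequality $a_{\ell1}\ge\rho_\ell-|M_\ell|-(n-r)\rounddown{\rho_\ell/n}$ with the upper bound $a_{\ell1}\le n\roundup{\rho_\ell/n}-r\roundup{\rho_\ell/n}-(n-r)\rounddown{\rho_\ell/n}$. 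Together they yield $\rho_\ell-|M_\ell|\le (n-r)\roundup{\rho_\ell/n}$. Hence $(\rho_\ell-|M_\ell|)/(n-r)\approx\rho_\ell/n$.

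\emph{Sufficiency.} Take $a_{\ell2}=0$ and the minimal choice $a_{\ell1}=\rho_\ell-|M_\ell|-(n-r)\rounddown{\rho_\ell/n}$, which is nonnegative by the first condition. We check feasibility in four steps.
\begin{itemize}
\item The sum condition holds for $t=1$: $\sum_\ell(\rho_\ell-|M_\ell|)=n^2-rn$ gives $\sum_\ell a_{\ell1}=(n-r)\bigl(n-\sum_\ell\rounddown{\rho_\ell/n}\bigr)$. It holds trivially for $t=2$.
\item The third feasibility inequality holds with equality.
\item The lower bound for $t=1$ reduces to $|M_\ell|\le r\roundup{\rho_\ell/n}$, which is immediate.
\item The upper bound for $t=1$ reduces to $r\roundup{\rho_\ell/n}-|M_\ell|\le\min\{n\roundup{\rho_\ell/n}-\rho_\ell,\ \eta_{[r]}(\ell)\}$. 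The first term of the minimum is handled by $\rho_\ell-|M_\ell|\le(n-r)\roundup{\rho_\ell/n}$, and the second by the row identity.
\end{itemize}
For $t=2$ the bounds reduce to $0\le\min\{n\roundup{\rho_\ell/n},\rho_\ell+\eta_{[n]}(\ell)\}-n\roundup{\rho_\ell/n}$. This requires $\eta_{[n]}(\ell)\ge n\roundup{\rho_\ell/n}-\rho_\ell$ for free $\ell$, which I would verify from the column constraints together with the first condition; I flag it as the delicate check.

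Finally we verify \eqref{MainNcond} and \eqref{ncond2R}. \eqref{MainNcond} is the first condition for $t=1$ and trivial for $t=2$. For $t=1$, note $f_1(\ell)=r\roundup{\rho_\ell/n}-|M_\ell|=\eta_{[r]}(\ell)\ge\eta_Z(\ell)$, so the minimum in \eqref{ncond2R} is $\eta_Z(\ell)$. The right side then equals $\sum_{i\in Z}\bigl(\sum_\ell\roundup{\rho_\ell/n}-\eta_{[k]}(i)\bigr)=n|Z|$ by the row identity, with equality. For $t=2$, \eqref{ncond2R} is the second hypothesis. Theorem~\ref{main2} then gives the extension.
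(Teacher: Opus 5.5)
Your route is the paper's: specialize Theorem~\ref{main2} to $s=n$, take $a_{\ell2}=0$ and $a_{\ell1}=\rho_\ell-|M_\ell|-(n-r)\rounddown{\rho_\ell/n}$, and use the full-row identity $|M_\ell^i|=\roundup{\rho_\ell/n}-\eta_i(\ell)$ to get $f_1(\ell)=\eta_{[r]}(\ell)$ and equality in \eqref{ncond2R} for $t=1$. Your reading of the definition, with lower bounds on full rows, is the one the paper's proof uses. Your derivation of the upper half of the first condition in the necessity direction is more explicit than the paper's.

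The gap is the step you flagged: the $t=2$ upper feasibility bound $\eta_{[n]}(\ell)\ge n\roundup{\rho_\ell/n}-\rho_\ell$. It does \emph{not} follow from the column constraints and the first condition. Take $n=4$, $r=3$, $\brho=(5,4,4,3)$, and let $M$ have rows $(1,1,2,3)$, $(1,2,3,4)$, $(3,1,4,2)$. This is an equitable $\brho$-Latin rectangle, and $\rho_\ell-|M_\ell|=1$ for every $\ell$, so the first condition holds. But symbol $1$ is saturated in columns $1$ and $2$, so $\eta_{[4]}(1)=2<3=4\roundup{5/4}-5$. Indeed $M$ cannot extend: columns $3$ and $4$ still each need a $1$, which forces at least six $1$'s.

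The missing idea is to use the second hypothesis with $J=[n]$. Each summand there satisfies
\[
n\roundup{\rho_\ell/n}-\min\bigl\{n\roundup{\rho_\ell/n}-\rho_\ell,\ \eta_{[n]}(\ell)\bigr\}\ \ge\ \rho_\ell .
\]
Since $\sum_\ell\rho_\ell=n^2$, the inequality $n^2\ge\sum_\ell(\cdots)$ forces equality in every term. Hence $\eta_{[n]}(\ell)\ge n\roundup{\rho_\ell/n}-\rho_\ell$ for all $\ell$; this is Ore's condition for $\Gamma_2$ applied to the whole column side. In the example above, the right-hand side at $J=[4]$ is $17>16$, as it must be.

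With this line added, your proof is complete. The paper's own proof, when it reduces feasibility to conditions on $\{(a_\ell,0)\}_{\ell\in[k]}$, lists only the $t=1$ constraints and never checks this one, so you were right to single it out.
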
 

The proof of Theorem~\ref{main2} uses Ore’s theorem \cite{MR83725} together with an integer-making lemma due to Nash-Williams \cite{MR916377}. Baranyai \cite{MR535941} studied almost-regular colorings of multipartite hypergraphs, 
and Theorem~\ref{main2} generalizes this result for bipartite graphs by providing conditions under which an edge coloring of the complete bipartite graph $K_{r,s}$ can be extended to an edge coloring of $K_{n,n}$ such that each color class is spanning, almost regular, and the number of edges of each color is prescribed. 
Unlike previous embedding results that restrict the maximum number of occurrences of each symbol to at most $n$, Theorem~\ref{main2} generalizes corresponding results for $\brho$-Latin squares \cite{BAHMANIAN2022105632} 
and $r\times n$ $\brho$-Latin rectangles \cite{MR3280683} to equitable $\brho$-Latin squares and rectangles, as illustrated in Corollary~\ref{main1}.  We also note that equitable $\brho$-Latin squares can be viewed as a natural generalization of exact $(p,p,1)$-Latin squares \cite{ANDERSEN1980125,ANDERSEN1980235}, 
where each symbol occurs exactly $p$ times in each row and  column. 
In the case of equitable $\brho$-Latin squares, the total occurrences of each symbol may vary or may not be divisible by $n$, 
and the occurrences are distributed as evenly as possible across rows and columns. 
Orthogonality of special equitable rectangles was studied by Asplund and Keranen \cite{MR2787313}.

The paper is organized as follows. Section~\ref{s:ryser} contains a proof of Theorem~\ref{main2}. A sketch of the proof of the sufficiency part of Theorem~\ref{main2} is given at the beginning of Subsection~\ref{suffproofsubsec}. Section~\ref{s:corollaries} presents consequences of our main results along with several open problems.

\section{Proof of Theorem \ref{main2}}  \label{s:ryser}
\subsection{Tools}
All graphs will be loopless, but may contain multiple edges. For a graph $G$, vertices $u,v\in V(G)$, and subsets $S,T\subseteq V(G)$, we denote by $\dg_G(u)$, $\mult_G(uv)$, and  $\mult_G(uS)$ the number of edges incident with $u$,  the number of edges whose endpoints are $u$ and $v$, and the number of edges incident to $u$ with an endpoint in $S$, respectively. For a real-valued function $f$ on a domain $D$ and  $S\subseteq D$, $f(S) :=\sum_{x\in S}f(x)$. For  a non-negative integer function $f$  on the vertex set of a graph $G$, an {\it $f$-factor} is a spanning subgraph $F$ of $G$ with the property that $\dg_F(x)=f(x)$ for each $x$. By Ore's Theorem \cite{MR83725}, 
the bipartite graph $G[X,Y]$ has an $f$-factor if and only if $f(X)=f(Y)$ and
\begin{align*} 
    f(A)\leq \sum_{u\in Y} \min\Big\{f(u), \mult_G(uA)\Big\} \quad \text {for } A\subseteq X.
\end{align*}

Recall that  $x\approx y$ means $\rounddown{y}\leq x \leq \roundup{y}$. Note that this is a transitive relation and that if $x\approx y$ and $n\in\mathbb{N}$, then we have $x/n \approx y/n$. 
A family of sets is  {\it laminar} if any two sets in the family are either disjoint or one is contained in the other. Nash-Williams' lemma \cite{MR916377} states that for $n\in \mathbb{N}$ and two laminar families $\mathscr{A}$ and $\mathscr{B}$   of subsets of a finite set $S$, there exists a subset $U \subseteq S$ such that
\[
|U \cap W| \approx \frac{|W|}{n} \quad \text {for } W \in \mathscr{A} \cup \mathscr{B}.
\]

\subsection{Proof of Necessity}
Let \(N\) be an \(n\times n\) equitable \(\brho\)-Latin square containing an equitable \(\brho\)-Latin rectangle \(M\) in its top-left \(r\times s\) subrectangle, and let \(\overline{M} := N \setminus M\). We refer to the last \(n-r\) rows and the last \(n-s\) columns of \(N\) as the {\it rows} and {\it columns} of \(\overline{M}\), respectively.

Let  $\ell\in[k]$. Since \(N\) is an equitable \(\brho\)-Latin square, every row and column of \(N\) contains at least \(\lfloor \rho_\ell/n \rfloor\) copies of \(\ell\). In particular,
\begin{equation}\label{lastrowcol}
|N_\ell^i| \ge \left\lfloor \frac{\rho_\ell}{n}\right\rfloor \text{ for } i=r+1,\dots,n,
\qquad
|\leftindex^j N_\ell| \ge \left\lfloor \frac{\rho_\ell}{n}\right\rfloor \text{ for } j=s+1,\dots,n.
\end{equation}
Exactly $\rho_\ell-|M_\ell|$ occurrences of $\ell$ lie in $\overline{M}$, so by \eqref{lastrowcol},
\begin{align*}
    \rho_\ell - |M_\ell|
    \ge
    \max\{n-r, n-s\}
    \left\lfloor\dfrac{\rho_\ell}{n}\right\rfloor.
\end{align*}
If $\ell$ is free, let $a_\ell$ and $b_\ell$ denote the number of $\ell$-saturated rows  and  columns of $\overline{M}$, respectively; if $\ell$ is forced, we set $a_\ell=b_\ell=0$. Then
\begin{align*}
a_\ell \le (n-r) \Bigl( \roundup{\dfrac{\rho_\ell}{n}} - \rounddown{\dfrac{\rho_\ell}{n}} \Bigr), \;\;
b_\ell \le (n-s) \Bigl( \roundup{\dfrac{\rho_\ell}{n}} - \rounddown{\dfrac{\rho_\ell}{n}} \Bigr).
\end{align*}
Recall that $\overline{M}$ contains $\rho_\ell-|M_\ell|$ occurrences of $\ell$. 
By \eqref{lastrowcol}, at most
$(2n-r-s)\rounddown{\rho_\ell/n}$
occurrences of $\ell$ can lie in $\ell$-unsaturated  rows and columns of $\overline{M}$ . 
All remaining occurrences must therefore lie in $\ell$-saturated rows or columns of $\overline{M}$, implying
\begin{align*}
    \rho_\ell - |M_\ell|\leq a_\ell + b_\ell   + (2n-r-s)\left\lfloor\dfrac{\rho_\ell}{n}\right\rfloor.
\end{align*}
The rows of $\overline{M}$ contain $n(n-r)$ cells in total, and the columns of $\overline{M}$ contain $n(n-s)$ cells in total. Again, using \eqref{lastrowcol}, we account for
$(n-r)\left\lfloor \rho_\ell / n \right\rfloor$ and $(n-s)\left\lfloor \rho_\ell / n \right\rfloor$
copies of $\ell$ in $\overline{M}$ coming from row and column constraints, respectively.
The remaining copies of $\ell$ in $\overline{M}$ are precisely those counted by $a_\ell$ and $b_\ell$, and hence
\begin{align*}
\sum_{\ell\in[k]} \left(a_\ell + (n-r)\rounddown{\dfrac{\rho_\ell}{n}}\right) = n(n-r), \quad
\sum_{\ell\in[k]} \left(b_\ell + (n-s)\rounddown{\dfrac{\rho_\ell}{n}}\right) = n(n-s).
\end{align*}

Let \(I\subseteq [r]\), viewed as rows of both \(M\) and \(N\). We count in two ways $\mathfrak{u}$ where
\[
\mathfrak{u}:=\sum_{\ell\in [k]} |\mathscr U_I(\ell)|,
\qquad
\mathscr U_I(\ell):=\bigl\{ (i,\ell)\mid i\in I,\  |N_\ell^i| < \left\lceil \frac{\rho_\ell}{n}\right\rceil \bigr\}
\quad \text{for } \ell\in[k].
\]
On the one hand, each row of \(I\) contains \(n\) cells, while each symbol \(\ell\in[k]\) may occur in a row at most
\(\roundup{\rho_\ell/n}\) times. Hence
\(\sum_{\ell\in[k]}\roundup{\rho_\ell/n} - n\)
counts the number of symbols in \([k]\) that are unsaturated in a given row of \(N\). So
\[
\mathfrak{u}=|I|\Bigl(\sum_{\ell\in[k]}\left\lceil \frac{\rho_\ell}{n}\right\rceil-n\Bigr).
\]
On the other hand, by the definition of \(\eta_I(\ell)\), we have $|\mathscr U_I(\ell)|\le \eta_I(\ell)$ for  \(\ell\in[k]\). The number of occurrences of \(\ell\) in the last \(n-r\) rows of \(N\) is
\((n-r)\rounddown{\rho_\ell/n}+a_\ell\), and consequently the number of occurrences of \(\ell\) in the first \(r\) rows is
\(\rho_\ell - (n-r)\rounddown{\rho_\ell/n}- a_\ell\). Since each row in \([r]\) can contain at most
\(\roundup{\rho_\ell/n}\) copies of \(\ell\), the total number of available positions for \(\ell\) in rows of \([r]\) is at most
\(r\roundup{\rho_\ell/n}\). As the total number of available positions for $\ell$ in rows of $[r]$ cannot be less than the actual number of occurrences of $\ell$ in rows of $[r]$, it follows immediately that
\begin{align*}
    a_\ell \geq \rho_\ell - (n-r)\rounddown{\dfrac{\rho_\ell}{n}} - r\roundup{\dfrac{\rho_\ell}{n}}.
\end{align*}
Comparing the total number of available positions for \(\ell\) with the actual number of occurrences of \(\ell\) in the first \(r\) rows further yields
$|\mathscr U_I(\ell)| \le |\mathscr U_{[r]}(\ell)| = r\left\lceil \rho_\ell/n \right\rceil + (n-r)\left\lfloor \rho_\ell/n \right\rfloor + a_\ell - \rho_\ell$,
and therefore
\[
|\mathscr U_I(\ell)|
\le
\min\left\{\eta_I(\ell),\;
r\roundup{\frac{\rho_\ell}{n}} + (n-r)\rounddown{\frac{\rho_\ell}{n}} + a_\ell - \rho_\ell
\right\}.
\]
Consequently, summing over \(\ell\in[k]\), we obtain
\begin{align*}
|I|\Big(\sum_{\ell\in[k]}\roundup{\dfrac{\rho_\ell}{n}} - n\Big)
\le
\sum_{\ell\in[k]}
\min\left\{
r\roundup{\frac{\rho_\ell}{n}} + (n-r)\rounddown{\frac{\rho_\ell}{n}} + a_\ell - \rho_\ell,\;
\eta_I(\ell)
\right\}.
\end{align*}
As $|\mathscr U_{[r]}(\ell)|\leq \eta_{[r]}(\ell)$, we also have that
\begin{align*}
    a_\ell \leq \eta_{[r]}(\ell) + \rho_\ell - r\roundup{\dfrac{\rho_\ell}{n}}  - (n-r)\rounddown{\dfrac{\rho_\ell}{n}}.
\end{align*}
By a similar argument, 
\begin{align*}
|J|\Big(\sum_{\ell\in[k]}\roundup{\dfrac{\rho_\ell}{n}} - n\Big)
&\le
\sum_{\ell\in[k]}
\min\left\{
s\roundup{\frac{\rho_\ell}{n}} + (n-s)\rounddown{\frac{\rho_\ell}{n}} + b_\ell - \rho_\ell,\;
\eta_J(\ell)
\right\} &\forall J\subseteq[s],\\
    \rho_\ell - (n-s)\rounddown{\dfrac{\rho_\ell}{n}} - s\roundup{\dfrac{\rho_\ell}{n}} &\leq b_\ell \leq \eta_{[s]}(\ell) + \rho_\ell - s\roundup{\dfrac{\rho_\ell}{n}}  - (n-s)\rounddown{\dfrac{\rho_\ell}{n}} &\forall \ell\in[k].
\end{align*}

\subsection{Proof of Sufficiency} \label{suffproofsubsec}
Assume the necessary conditions hold. Starting from the given equitable \(\brho\)-Latin rectangle   $M$, we first add one column and one row by encoding admissible symbol placements into auxiliary bipartite graphs $\Gamma_1$ and $\Gamma_2$, where edges correspond to occurrences of free symbols that are not yet saturated in the relevant rows and columns. Degree functions $f_1$ and $f_2$ are defined from the remaining symbol requirements, and condition~\eqref{ncond2R} together with feasibility of $(a_\ell,b_\ell)_{\ell\in[k]}$ ensures the hypotheses of Ore’s $f$-factor theorem. Hence $\Gamma_1$ and $\Gamma_2$ admit $f_1$- and $f_2$-factors, which determine a multi-array $T$ satisfying Lemma~\ref{claim1}, including the correct total symbol counts and the prescribed row and column balance conditions.

We then iteratively apply Lemma~\ref{claim2}, where at each step a row or column is split and symbol incidences are represented as a multiset of cell--symbol pairs. The construction is governed by two laminar families arising from symbol classes and position classes, and Nash--Williams’ lemma is applied to select a subset $U$ that simultaneously preserves all required proportional constraints across these families. This ensures that the updated arrays continue to satisfy the same $f$-factor feasibility conditions at each stage, so the induction proceeds.

The process terminates with an $n\times n$ multi-array $P$ satisfying $|P_\ell|=\rho_\ell$ for all $\ell\in[k]$ and exact row and column balance requirements, hence $P$ is an equitable $\rho$-Latin square containing $M$.

\begin{lemma} \label{claim1} 
    There exists a $\min\{n,r+1\}\times \min\{n, s+1\}$      multi-array $T$ on $[k]$ whose top-left $r\times s$ subarray is identical to $M$, and for  $i \in [r],\ j \in [s], \ell \in [k]$, 
\[
\left\{
\begin{aligned}
&|T_\ell| = \rho_\ell,\\[1mm]
&|\,^{s+1} T^i| = n-s, 
  && |\,^j T^{r+1}| = n-r, 
  && |\,^{s+1} T^{r+1}| = (n-r)(n-s),\\[2mm]
&|T^i_\ell| \approx |\,^j T_\ell| \approx \frac{\rho_\ell}{n}, 
  && |T^{r+1}_\ell| \approx (n-r) \frac{\rho_\ell}{n}, 
  && |\,^{s+1} T_\ell| \approx (n-s) \frac{\rho_\ell}{n}.
\end{aligned}
\right.
\]
\end{lemma}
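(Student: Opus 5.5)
We build $T$ in two stages, filling first the new column $s+1$ and then the new row $r+1$, and each stage is an application of Ore's theorem to an auxiliary bipartite graph. We treat only the case $r,s<n$; if one of $r,s$ equals $n$, the corresponding stage is skipped and the same argument applies with fewer constraints. Throughout we use the feasible sequence $\{(a_{\ell1},a_{\ell2})\}$ supplied by the hypotheses of Theorem~\ref{main2}.

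\textbf{Stage 1: filling column $s+1$.} In cell $(i,s+1)$, for each $i\in[r]$, we place $n-s$ symbols. We need row $i$ of $T$ to satisfy $|T^i_\ell|\approx\rho_\ell/n$, so row $i$ must reach at least $\rounddown{\rho_\ell/n}$ copies of $\ell$ and at most $\roundup{\rho_\ell/n}$. We therefore first place the forced amount $\max\{0,\rounddown{\rho_\ell/n}-|M^i_\ell|\}$ of each $\ell$. Since $M$ is equitable, this is simply $\rounddown{\rho_\ell/n}-|M^i_\ell|$ when $\ell$ is free and the row is $\ell$-unsaturated; for a saturated row it is $0$. The remaining slots are filled by optional extra copies of free symbols in $\ell$-unsaturated rows.

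To choose these extras we form a bipartite graph $\Gamma_2$ with parts $X=[r]$ and $Y=[k]$, joining $i$ to $\ell$ by a single edge when $\ell$ is free and row $i$ is $\ell$-unsaturated (so that $\eta$ counts degrees). On $Y$ we set $f(\ell)=f_2(\ell)$, which is the number of extra copies of $\ell$ that the first $r$ rows must absorb once $a_{\ell2}$ copies of $\ell$ are assigned to the saturated part of column $s+1$ restricted to the new row. On $X$, $f(i)$ is $n-s$ minus the forced amount in row $i$; equivalently, $f(i)=n-\sum_\ell \rounddown{\rho_\ell/n}-\#\{\text{saturated extra}\}$, which can be rewritten as $\sum_\ell\roundup{\rho_\ell/n}$ minus the number of unsaturated symbols minus $n$.

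We then check the hypotheses of Ore's theorem. The equality $f(X)=f(Y)$ should follow from the sum identity in the definition of feasibility together with the fact that each row of $M$ has $s$ entries. The inequality $f(Z)\le\sum_\ell\min\{f_2(\ell),\eta_Z(\ell)\}$ for $Z\subseteq[r]$ is, after rewriting $f(Z)$, exactly condition~\eqref{ncond2R} with $t=1$. The bounds on $a_{\ell t}$ in the feasibility definition guarantee that $0\le f_2(\ell)\le\eta_{[r]}(\ell)$. An $f$-factor then tells us which rows receive an extra copy, and this fills column $s+1$ in rows $[r]$.

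\textbf{Stage 2: filling row $r+1$.} We repeat the construction symmetrically for columns $[s]$, using the graph $\Gamma_1$, the function $f_1$, and condition~\eqref{ncond2R} with $t=2$. Finally, the corner cell $(r+1,s+1)$ receives all remaining copies, $\rho_\ell-|M_\ell|-(\text{copies placed in column } s+1)-(\text{copies placed in row } r+1)$ of each $\ell$.

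\textbf{Verifying the corner and the balance conditions.} We need these corner counts to be nonnegative, to sum to $(n-r)(n-s)$, and to give $|T^{r+1}_\ell|\approx(n-r)\rho_\ell/n$ and $|{}^{s+1}T_\ell|\approx(n-s)\rho_\ell/n$. By construction, the number of copies of $\ell$ in row $r+1$ equals $\rho_\ell$ minus the copies in rows $[r]$, which is $(n-r)\rounddown{\rho_\ell/n}+a_{\ell1}$. The lower bound in the feasibility definition, the upper bound $a_{\ell1}\le(n-r)(\roundup{\rho_\ell/n}-\rounddown{\rho_\ell/n})$ (which follows from the $\min$ with $n\roundup{\rho_\ell/n}$), and condition~\eqref{MainNcond} then place this count within the required range. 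Nonnegativity of the corner follows from $a_{\ell1}+a_{\ell2}\ge\rho_\ell-|M_\ell|-(2n-r-s)\rounddown{\rho_\ell/n}$, and the cardinality $(n-r)(n-s)$ follows from the sum conditions.

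The main obstacle is the bookkeeping needed to identify the Ore deficiency inequality exactly with \eqref{ncond2R}. In particular, one must correctly account for the forced copies and for rows in which $M$ already saturates $\ell$, since $f_t$ has to measure precisely the slack $|\mathscr U_{[r_t]}(\ell)|$. I would do this first, for a single symbol, before summing over $\ell$.
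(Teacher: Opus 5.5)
\textbf{There is a gap at the Ore step.} Your overall plan matches the paper's. An Ore factor in a rows--symbols graph fills column $s+1$, one in a columns--symbols graph fills row $r+1$, and the corner absorbs the rest. But the degree prescription at the step that carries the proof is wrong, so the identification with \eqref{ncond2R} that you assert does not hold.

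First, the indices are swapped. Filling column $s+1$ is governed by the rows $[r]$. As you use yourself at the end, row $r+1$ must receive $(n-r)\rounddown{\rho_\ell/n}+a_{\ell1}$ copies of $\ell$, so this stage must use $a_{\ell1}$ and $f_1$, not $a_{\ell2}$ and $f_2$. With $f_2$ nothing fits:
\begin{itemize}
\item $\sum_\ell f_2(\ell)=s\bigl(\sum_\ell\roundup{\rho_\ell/n}-n\bigr)$ does not balance the row side in general;
\item feasibility bounds $f_2(\ell)$ by $\eta_{[s]}(\ell)$, not by $\eta_{[r]}(\ell)$;
\item \eqref{ncond2R} for $Z\subseteq[r]$ involves $f_1$, not $f_2$.
\end{itemize}

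Second, even with the index corrected, $f_1(\ell)$ is not the number of extra copies. It is the slack $|\mathscr U_{[r]}(\ell)|$ you mention at the end, i.e.\ the number of rows $i\in[r]$ in which $\ell$ must \emph{stay} unsaturated in $T$. Your factor selects the rows that receive an extra copy, so its correct degrees are $f(\ell)=\eta_{[r]}(\ell)-f_1(\ell)$ and $f(i)=\eta_{[k]}(i)-\bigl(\sum_\ell\roundup{\rho_\ell/n}-n\bigr)$. Your rewriting of $f(i)$ has the sign reversed.

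With these degrees, Ore's inequality for $Z\subseteq[r]$ is not \eqref{ncond2R} for $Z$. Using $\sum_\ell f_1(\ell)=r\bigl(\sum_\ell\roundup{\rho_\ell/n}-n\bigr)$, it becomes $\sum_\ell\min\{f_1(\ell),\eta_{[r]\setminus Z}(\ell)\}\ge|[r]\setminus Z|\bigl(\sum_\ell\roundup{\rho_\ell/n}-n\bigr)$. That is \eqref{ncond2R} for the complement $[r]\setminus Z$. So your route is repairable.

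The cleaner fix is the paper's. In your graph (the paper's $\Gamma_1$), take the factor $\Theta_1$ to be the set of pairs that stay unsaturated, with $f_1(x_i)=\sum_\ell\roundup{\rho_\ell/n}-n$ and $f_1(\ell)$ as in the theorem. Then:
\begin{itemize}
\item Ore's condition is \eqref{ncond2R} verbatim.
\item $\dg_{\Gamma_1}(x_i)\ge f_1(x_i)$ follows from \eqref{ncond2R} with $Z=\{i\}$. This is also what guarantees that your forced copies fit into the $n-s$ slots, a point you did not check.
\item Cell $(i,s+1)$ receives $\roundup{\rho_\ell/n}-|M^i_\ell|-\mult_{\Theta_1}(x_i\ell)$ copies of $\ell$, so your extras are exactly the edges of $\Gamma_1-\Theta_1$.
\end{itemize}
Stage~2 needs the same correction, with $a_{\ell2}$, $f_2$ and $t=2$.

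After that, your corner computation and range checks agree with the paper's. In both arguments, what feasibility gives there is $(n-r)\rounddown{\rho_\ell/n}\le|T^{r+1}_\ell|\le(n-r)\roundup{\rho_\ell/n}$, i.e.\ $|T^{r+1}_\ell|/(n-r)\approx\rho_\ell/n$. This is the form Lemma~\ref{claim2} uses.
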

\begin{proof}
If $n>s$, 
then let $\Gamma_1[X,[k]]$  be  a  bipartite graph, where 
$X:=\{x_1,\dots,x_r\}$ and $[k]$ represent the rows and symbols of $M$. For $i\in[r]$ and $\ell\in [k]$, $x_i\ell$ is an edge in $\Gamma_1$ if and only if $\ell$ is a free  symbol that is unsaturated in row $i$ of $M$. Let $f_1$ be a mapping on the vertex set of $\Gamma_1$  such that for $i\in [r],\ell\in [k]$,
\begin{align*}
f_1(x_i) = \sum_{\ell \in [k]} \roundup{\frac{\rho_\ell}{n}} - n, \quad
f_1(\ell) = r \roundup{\frac{\rho_\ell}{n}} +(n- r) \rounddown{\frac{\rho_\ell}{n}}  - \rho_\ell + a_\ell
\end{align*}
Observe that $f_1(x_i)\geq 0$ for $i\in [r]$. The  feasibility of $(a_\ell,b_\ell)_{\ell\in[k]}$ ensures $f_1(\ell)\geq 0$ for $\ell\in [k]$, and that
$\sum_{\ell\in[k]}a_\ell = (n-r)\Big(n-\sum_{\ell\in[k]}\rounddown{\dfrac{\rho_\ell}{n}}\Big)$, and so
\begin{align*}
    \sum_{i\in[r]}\Big(\sum_{\ell\in[k]}\roundup{\dfrac{\rho_\ell}{n}} - n\Big) = \sum_{\ell\in[k]}\left(r\roundup{\dfrac{\rho_\ell}{n}}-r\rounddown{\dfrac{\rho_\ell}{n}} + n\rounddown{\dfrac{\rho_\ell}{n}} - \rho_\ell + a_\ell\right),
\end{align*}
or equivalently, $f_1(X) = f_1([k])$.

We claim that $\dg_{\Gamma_1}(u)\geq f_1(u)$ for $u\in X\cup [k]$. To see this, recall that $\mult_{\Gamma_1}(x_i\ell) = 1$ if $\ell$ is free such that $|M_\ell^i|<\roundup{\rho_\ell/n}$ and is $0$ otherwise. Hence, we first have by \eqref{ncond2R} that for $i\in[r]$,
\begin{align*}
    \dg_{\Gamma_1}(x_i) = \sum_{\ell\in[k]}\eta_i(\ell) &\geq \sum_{\ell\in[k]}\min\{r\roundup{\dfrac{\rho_\ell}{n}} + (n-r)\rounddown{\dfrac{\rho_\ell}{n}} - \rho_\ell + a_\ell,\eta_i(\ell)\}\\
    &\geq |\{x_i\}|\Big(\sum_{\ell\in[k]}\roundup{\dfrac{\rho_\ell}{n}}-n\Big)= \sum_{\ell\in[k]}\roundup{\dfrac{\rho_\ell}{n}}-n = f_1(x_i).
\end{align*}
Second, we have by the feasibility of the sequence $\{(a_\ell, b_\ell)\}_{\ell\in[k]}$ that $a_\ell \leq \rho_\ell + \eta_{[r]}(\ell) - r\roundup{\rho_\ell/n} - (n-r)\rounddown{\rho_\ell/n}$, and so
\begin{align*}
    \dg_{\Gamma_1}(\ell) = \sum_{i\in[r]}\eta_i(\ell) = \eta_{[r]}(\ell) &\geq r\roundup{\dfrac{\rho_\ell}{n}} + (n-r)\rounddown{\dfrac{\rho_\ell}{n}} - \rho_\ell + a_\ell = f_1(\ell).
\end{align*}

Using \eqref{ncond2R}, we  have 
\begin{equation*} 
    f_1(I) \leq \sum_{\ell\in[k]}\min\{f_1(\ell), \mult_{\Gamma_1}(I\ell)\} \quad \text{for } I\subseteq X.
\end{equation*}
Thus, by Ore's theorem, $\Gamma_1$ has  an $f_1$-factor $\Theta_1$.

If $n>r$, we define $\Gamma_2[Y,[k]]$ where $Y:=\{y_1,\dots,y_s\}$ and 
$[k]$ represent the  columns and symbols of $M$. For $j\in[s]$ and $\ell\in [k]$, $y_j\ell$ is an edge in $\Gamma_2$ if and only if $\ell$ is a free  color that is unsaturated in column $j$ of $M$, and  $f_2$ is a mapping on the vertex set of $\Gamma_2$  such that
\begin{align*}
f_2(y_j) = \sum_{\ell \in [k]} \roundup{\frac{\rho_\ell}{n}} - n, \quad
f_2(\ell) = s \roundup{\frac{\rho_\ell}{n}} +(n- s) \rounddown{\frac{\rho_\ell}{n}} - \rho_\ell + b_\ell.
\end{align*}
By a very similar argument, we have $f_2(y_j)\geq 0$ for $j\in [s]$,  $f_2(\ell)\geq 0$ for $\ell\in [k]$,  $\dg_{\Gamma_2}(u)\geq f_2(u)$ for $u\in Y\cup [k]$, and $f_2(Y) = f_2([k])$. Using \eqref{ncond2R} and Ore's theorem, we conclude that $\Gamma_2$ has  an $f_2$-factor $\Theta_2$.

 We let $T$  be a $\min\{n,r+1\}\times \min\{n, s+1\}$ multi-array whose top-left $r\times s$ subarray is identical to $M$, and for  $i\in [r], j\in [s],\ell\in [k]$,
 \begin{align*}
\begin{cases}
\left|\,^{s+1} T_\ell^{i}\right|
    = \roundup{\dfrac{\rho_\ell}{n}} - |M^i_\ell| - \mult_{\Theta_1}(x_i\ell) 
      & \text{if } n > s,\\[1.5mm]
\left|\,^{j} T_\ell^{r+1}\right|
    = \roundup{\dfrac{\rho_\ell}{n}} - |\,^j M_\ell| - \mult_{\Theta_2}(y_j\ell) 
      & \text{if } n > r,\\[1.5mm]
\left|\,^{s+1} T_\ell^{r+1}\right|
    = a_\ell + b_\ell - \rho_\ell + |M_\ell|
      + (2n-r-s)\left\lfloor \dfrac{\rho_\ell}{n} \right\rfloor 
      & \text{if } n > \max\{r,s\}.
\end{cases}
\end{align*}

The  feasibility of  $\{(a_\ell, b_\ell)\}_{\ell\in[k]}$ ensures that $|\leftindex^{s+1} T_\ell^{r+1}|\geq 0$ for $\ell\in [k]$. If $n>s$, each of the first $r$ cells of the last column of $T$ contains $n-s$ symbols, as for $i\in[r]$,
\begin{align*}
   \sum_{\ell\in[k]} |\leftindex^{s+1} T_\ell^{i}|= \sum_{\ell\in[k]}\roundup{\dfrac{\rho_\ell}{n}} - s - \dg_{\Theta_1}(x_i) = n-s.
\end{align*}
Using similar reasoning, each of the first $s$ cells in the last row of $T$ contains $n-r$ symbols if $n>r$. If $n > \max\{r,s\}$, the number of symbols  in cell $(r+1,s+1)$ of $T$ is
\begin{align*}
&\sum_{\ell \in [k]} \Bigl( a_\ell + b_\ell - \rho_\ell + |M_\ell| + (2n-r-s)\rounddown{\dfrac{\rho_\ell}{n}} \Bigr) \\
&\quad = (n-r) \Bigl( n - \sum_{\ell \in [k]} \rounddown{\dfrac{\rho_\ell}{n}} \Bigr)
      + (n-s) \Bigl( n - \sum_{\ell \in [k]} \rounddown{\dfrac{\rho_\ell}{n}} \Bigr) \\
&\quad \quad - n^2 + rs + (2n-r-s) \sum_{\ell \in [k]} \rounddown{\dfrac{\rho_\ell}{n}} \\
&\quad = (n-r)(n-s).
\end{align*}

Let us fix $\ell\in [k]$. We have 
\begin{align*}
|T_\ell| 
&= \sum_{i \in [r]} \Bigl( \Bigl\lceil \frac{\rho_\ell}{n} \Bigr\rceil - |M_\ell^i| - \mathrm{mult}_{\Theta_1}(x_i \ell) \Bigr) 
 + \sum_{j \in [s]} \Bigl( \Bigl\lceil \frac{\rho_\ell}{n} \Bigr\rceil - |\,^j M_\ell| - \mathrm{mult}_{\Theta_2}(y_j \ell) \Bigr) \\[1mm]
&\quad + a_\ell + b_\ell  - \rho_\ell + (2n-r-s) \Bigl\lfloor \frac{\rho_\ell}{n} \Bigr\rfloor + |M_\ell| \\[1mm]
&= \rho_\ell.
\end{align*}
For $i\in[r]$, we have $\lfloor \rho_\ell / n \rfloor \le \lceil \rho_\ell / n \rceil - \mathrm{mult}_{\Theta_1}(x_i \ell) \le \lceil \rho_\ell / n \rceil$, and so 

\begin{align*}
    |T_\ell^i|= |\leftindex^{s+1} T_\ell^{i}|+|M^i_\ell| = \roundup{\dfrac{\rho_\ell}{n}} - \mult_{\Theta_1}(x_i\ell) \approx \dfrac{\rho_\ell}{n},
\end{align*}
and similarly, $|\leftindex^{j} T_\ell|\approx \rho_\ell/n$ for $j\in[s]$. Moreover, the  feasibility of  $\{(a_\ell, b_\ell)\}_{\ell\in[k]}$ implies that
\begin{align*}
|T_\ell^{r+1}| 
&= \sum_{j \in [s]} \Bigl(\Bigl\lceil \frac{\rho_\ell}{n} \Bigr\rceil - |^j M_\ell| - \operatorname{mult}_{\Theta_2}(y_j \ell)\Bigr) 
+ \Bigl(a_\ell + b_\ell - \rho_\ell + |M_\ell| + (2n-r-s)\Bigl\lfloor \frac{\rho_\ell}{n} \Bigr\rfloor \Bigr) \\[1mm]
&= s \Bigl\lceil \frac{\rho_\ell}{n} \Bigr\rceil - \operatorname{deg}_{\Theta_2}(\ell) 
+ \Bigl(a_\ell + b_\ell - \rho_\ell + (2n-r-s)\Bigl\lfloor \frac{\rho_\ell}{n} \Bigr\rfloor \Bigr) \\[1mm]
&= s \Bigl\lceil \frac{\rho_\ell}{n} \Bigr\rceil - \Bigl( s \Bigl\lceil \frac{\rho_\ell}{n} \Bigr\rceil + (n-s) \Bigl\lfloor \frac{\rho_\ell}{n} \Bigr\rfloor - \rho_\ell + b_\ell \Bigr) 
+ \Bigl(a_\ell + b_\ell - \rho_\ell + (2n-r-s)\Bigl\lfloor \frac{\rho_\ell}{n} \Bigr\rfloor \Bigr) \\[1mm]
&= a_\ell + (n-r) \Bigl\lfloor \frac{\rho_\ell}{n} \Bigr\rfloor \\[1mm]
&\approx (n-r) \frac{\rho_\ell}{n},
\end{align*}
and by a very similar argument, $|\leftindex^{s+1} T_\ell| \approx (n-s)\rho_\ell/n$. 
\end{proof}

To complete the proof of Theorem \ref{main2}, it suffices to take $v = n-r$ and $w = n-s$ in the following.

\begin{lemma} \label{claim2}
For $v \in [n-r]$ and $w \in [n-s]$, there exists an $(r+v)\times (s+w)$ multi-array $P$ on $[k]$ whose top-left $r\times s$ subarray coincides with $M$, and such that
\begin{align}\label{Pconditions}
\text{for } i \in [r+v],\ j \in [s+w],\ \ell \in [k], \quad
\begin{cases}
    |P_\ell| = \rho_\ell,\\
    |\,^jP^i| = (p^i)(\,^jp),\\
    \dfrac{|P^i_\ell|}{p^i} \approx \dfrac{|\,^jP_\ell|}{\,^jp} \approx \dfrac{\rho_\ell}{n},
\end{cases}
\end{align}
where
\begin{align*}
p^i &= 
\begin{cases}
    n-r-v+1 & \text{if } i = r+v,\\
    1 & \text{if } i \in [r+v-1],
\end{cases}
&
^jp &= 
\begin{cases}
    n-s-w+1 & \text{if } j = s+w,\\
    1 & \text{if } j \in [s+w-1].
\end{cases}
\end{align*}
\end{lemma}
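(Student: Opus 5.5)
Argue by induction on $v+w$. The base case $v=w=1$ is Lemma~\ref{claim1}: there $p^{r+1}=n-r$ and $^{s+1}p=n-s$, so $|\,^jT^i|=(p^i)(\,^jp)$ in every case. For the row conditions, $|T^{r+1}_\ell|\approx (n-r)\rho_\ell/n$ gives $|T^{r+1}_\ell|/p^{r+1}\approx\rho_\ell/n$, using the observation that $x\approx y$ implies $x/n\approx y/n$. The column conditions follow the same way.

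For the inductive step, suppose $P$ is an $(r+v)\times(s+w)$ multi-array satisfying \eqref{Pconditions} with $v<n-r$; the case $w<n-s$ is symmetric with rows and columns swapped. Write $q:=p^{r+v}=n-r-v+1\ge 2$. We split the last row into two rows: a new row $r+v$ of weight $1$ and a row $r+v+1$ of weight $q-1=n-r-(v+1)+1$.

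Let $S$ be the multiset of all triples (cell $(r+v,j)$, symbol $\ell$, copy index) arising from the last row of $P$. Take two laminar families on $S$:
\begin{itemize}
\item $\mathscr A$ consists of the sets $S_j$ of triples lying in cell $(r+v,j)$, for $j\in[s+w]$. These are disjoint.
\item $\mathscr B$ consists of the sets $S^\ell$ of triples carrying symbol $\ell$, for $\ell\in[k]$. These are disjoint.
\end{itemize}
Add $S$ itself to one family, and for each cell $j$ and symbol $\ell$ add the intersection $S_j\cap S^\ell$ to $\mathscr A$; this refines the sets $S_j$ and keeps $\mathscr A$ laminar.

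Apply Nash-Williams' lemma with the integer $q$ to obtain $U\subseteq S$ with $|U\cap W|\approx |W|/q$ for every $W$ in either family. Place $U$ in the new row $r+v$ and $S\setminus U$ in row $r+v+1$.

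Then check \eqref{Pconditions} for the new array $P'$.
\begin{itemize}
\item Cell sizes. $|S_j|=q\cdot{}^jp$, so $|U\cap S_j|={}^jp=(1)(^jp)$ exactly, and the complementary cell has $(q-1)\,{}^jp$ symbols.
\item Total size. $|S|=q(n-s-w+1+s+w-1)=qn$ gives $|U|=n$ exactly.
\item Symbol totals $|P'_\ell|$ are unchanged.
\item New row $r+v$. $|U\cap S^\ell|\approx |P^{r+v}_\ell|/q\approx\rho_\ell/n$ by transitivity.
\item New row $r+v+1$. Its count is $|P^{r+v}_\ell|-|U\cap S^\ell|$. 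Dividing by $q-1$ should give $\approx \rho_\ell/n$. Write $|P^{r+v}_\ell|=q\lfloor\rho_\ell/n\rfloor+e$ with $0\le e\le q$ when $n\nmid \rho_\ell$, and $e=0$ otherwise. One has $|U\cap S^\ell|\in\{\lfloor\cdot\rfloor,\lceil\cdot\rceil\}$ with floor $\lfloor\rho_\ell/n\rfloor$ plus $0$ or $1$, and the remainder lies between $(q-1)\lfloor\rho_\ell/n\rfloor$ and $(q-1)\lceil\rho_\ell/n\rceil$. Handle the boundary cases $e=0$ and $e=q$ using the integrality $|U\cap S^\ell|=|S^\ell|/q$.
\item Columns. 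For $j<s+w$ the column counts are unchanged. The last column's count also stays the same, since we only split rows.
\end{itemize}

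The main obstacle is the column condition for cells whose weight changes. Column $j$ counts are unaffected by a row split, since the column's total symbol occurrences are the same; only the cell contents change. So the real constraint is just the two new row ratios. The apparent need for the cell$\times$symbol refinement is to keep later splits of columns consistent. When we subsequently split a column, the cell $(i,s+w)$ with $p^i=1$ must hold exactly $^jp$ symbols, and the row condition for the weighted row must survive. This is guaranteed by using $U\cap S_j$ exactly and $U\cap S^\ell$ approximately. The delicate point I expect to spend effort on is confirming that the complement row's ratio over $q-1$ remains within $[\lfloor\rho_\ell/n\rfloor,\lceil\rho_\ell/n\rceil]$, which is the bookkeeping above. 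Iterating alternately until $v=n-r$ and $w=n-s$ yields all weights $1$ and hence an equitable $\brho$-Latin square.
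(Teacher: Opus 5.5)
Your proposal is correct and follows essentially the same route as the paper. Both argue by induction on $v+w$ with Lemma~\ref{claim1} as the base case, split the weighted last row using Nash-Williams' lemma applied to the two laminar families of cell classes and symbol classes, and note that a row split leaves column counts unchanged. The extra sets $S$ and $S_j\cap S^\ell$ you add are harmless but unnecessary, since the inductive hypothesis \eqref{Pconditions} alone is enough for the next split. Your explicit check that $(q-1)\lfloor\rho_\ell/n\rfloor\le |P^{r+v}_\ell|-|U\cap S^\ell|\le (q-1)\lceil\rho_\ell/n\rceil$ is, if anything, more careful than the paper's chain of $\approx$'s for the complementary row.
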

\begin{proof}
We proceed by induction on $v+w$. The multi-array $T$ from Lemma \ref{claim1} satisfies \eqref{Pconditions}, establishing the base case $v = w = 1$ (so $v+w = 2$). Now we assume that there is an $(r+v)\times (s+w)$ multi-array $P$ satisfying \eqref{Pconditions} for $v+w<2n-r-s$, and suppose that $v<n-r$. We construct an $(r+v+1)\times (s+w)$ multi-array $Q$ on $[k]$ whose top-left $r \times s$ subarray coincides with $M$, and
\begin{align}\label{Qconditions}
    \text{ for }i\in[r+v+1], j\in[s+w], \ell\in[k], \quad 
    \begin{cases}
        |Q_\ell| = \rho_\ell,\\
        |^jQ^i| = (q^i) (^jq),\\
        \dfrac{|Q^i_\ell|}{q^i} \approx \dfrac{|^jQ_\ell|}{^jq} \approx \dfrac{\rho_\ell}{n}, 
    \end{cases}
\end{align}
where
\begin{align*}
q^i &= 
\begin{cases}
    n-r-v & \text{if } i = r+v+1,\\
    1 & \text{if } i \in [r+v],
\end{cases} &
^jq &= 
\begin{cases}
    n-s-w+1 & \text{if } j = s+w,\\
    1 & \text{if } j \in [s+w-1].
\end{cases}
\end{align*}

For $j\in[s+w], \ell\in[k]$, we define the following multi-sets.
\begin{align*}
    \mathbb{P}_\ell &= \bigcup_{j \in [s+w]} \{ (j,\ell) \mid \ell \in \,^jP^{r+v} \}, &
^j\mathbb{P} &= \bigcup_{\ell \in [k]} \{ (j,\ell) \mid \ell \in \,^jP^{r+v} \}.
\end{align*}
Observe that
\begin{align}\label{Hsizeeqn}
    |\mathbb{P}_\ell| &= |P_\ell^{r+v}|, &
    |\,^j\mathbb{P}| &= |\,^jP^{r+v}|.
\end{align}
We define two laminar families of subsets of $\mathbb{P}:=\bigcup\nolimits_{\ell\in[k]}\mathbb{P}_\ell$ as follows.
\begin{align*}
    \mathscr{A} = \{\mathbb{P}_1, \dots, \mathbb{P}_k\}, \quad
    \mathscr{B} = \{\,^1\mathbb{P}, \dots, \,^{s+w}\mathbb{P}\}.
\end{align*}
By Nash-Williams' lemma, there exists a subset $U\subseteq\mathbb{P}$ such that
\begin{align}\label{Zeqn}
    |U\cap W| \approx \dfrac{|W|}{n-r-v+1} \quad \text{ for  }W\in \mathscr{A}\cup\mathscr{B}.
\end{align}
Let $Q$ be the multi-array whose first $r+v-1$ rows match those of $P$, and whose last two rows are formed by splitting the final row of $P$, placing symbols in $U$ into row $r+v$ and all others into row $r+v+1$.

We now verify that $Q$ satisfies \eqref{Qconditions}. Observe that $\,^jp = \,^jq$ for $j \in [s+w]$ and $p^i = q^i$ for $i \in [r+v-1]$. Let us fix a symbol $\ell\in[k]$ and a column $j\in[s+w]$. Then $|Q_\ell| = |P_\ell| = \rho_\ell$ and $|\,^jQ_\ell|/\,^jq = |\,^jP_\ell|/\,^jp \approx \rho_\ell/n$. Since $\,^j\mathbb{P} \in \mathscr{B}$, the construction of $Q$ together with \eqref{Hsizeeqn} and \eqref{Zeqn} gives
\begin{align*}
    |\,^jQ^{r+v}| 
        &= |U \cap \,^j\mathbb{P}| 
        \approx \frac{|\,^j\mathbb{P}|}{\,n-r-v+1\,} 
        = \frac{|\,^jP^{r+v}|}{\,n-r-v+1\,} 
        = \, ^jp 
        = (q^{r+v})(\,^jq),\\[1mm]
    |\,^jQ^{r+v+1}| 
        &= |\,^jP^{r+v}| - |\,^jQ^{r+v}| 
        = (p^{r+v})(\,^jp) - (q^{r+v})(\,^jq) 
        = (q^{r+v+1})(\,^jq).
\end{align*}
Since $\mathbb{P}_\ell \in \mathscr{A}$ and $q^{r+v} = 1$, the construction of $Q$ together with \eqref{Hsizeeqn} and \eqref{Zeqn} implies that
\begin{align*}
    \frac{|Q_\ell^{r+v}|}{q^{r+v}} 
        &= |Q_\ell^{r+v}| 
        = |U \cap \mathbb{P}_\ell| 
        \approx \frac{|\mathbb{P}_\ell|}{\,n-r-v+1\,} 
        = \frac{|P_\ell^{r+v}|}{\,n-r-v+1\,} 
        = \frac{|P_\ell^{r+v}|}{p^{r+v}} 
        \approx \frac{\rho_\ell}{n},\\[1mm]
    |Q_\ell^{r+v+1}| 
        &= |P_\ell^{r+v}| - |Q_\ell^{r+v}| 
        \approx |P_\ell^{r+v}| - \frac{|P_\ell^{r+v}|}{\,n-r-v+1\,} 
        = q^{r+v+1} \frac{|P_\ell^{r+v}|}{p^{r+v}} 
        \approx q^{r+v+1} \frac{\rho_\ell}{n}.
\end{align*}
Hence, $Q$ satisfies \eqref{Qconditions}, and by a similar argument for $w < n-s$, the lemma follows.
\end{proof}

\section{\normalfont{Consequences of the Main Result and Open Problems}} \label{s:corollaries}

It is natural to identify cases in which the conditions of our theorem are simplified; in particular, cases in which the feasible sequence already exists.

For technical reasons, we have assumed throughout the paper that $r,s \in \mathbb{N}$. If this assumption is relaxed to allow $r=s=0$, then Lemma~\ref{claim2} with $r=s=0$ and $v=w=n$ yields a construction of equitable $\brho$-Latin squares. In this case, Lemma~\ref{claim1} is trivial, since one may take $T$ to be a single cell containing $\rho_\ell$ copies of the symbol $\ell$ for each $\ell \in [k]$.
\begin{corollary}
    For  $n,k\in \mathbb{N}$,  $\brho:=(\rho_1,\dots,\rho_k)$ with $1\leq \rho_1,\dots, \rho_k, k\leq  n^2$ and $\sum_{\ell\in [k]} \rho_\ell=n^2$, there exists an equitable $\brho$-Latin square of order $n$. 
\end{corollary}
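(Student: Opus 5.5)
The plan is to run the splitting machinery of Lemma~\ref{claim2} starting from the degenerate case $r=s=0$, exactly as suggested in the remark preceding the statement. Begin with a $1\times 1$ multi-array $T$ whose single cell contains $\rho_\ell$ copies of $\ell$ for each $\ell\in[k]$. This cell contains $\sum_\ell \rho_\ell = n^2 = (n-0)(n-0)$ symbols. Moreover, with $p^1={}^1p=n$, we have
\[
\frac{|T^1_\ell|}{n} = \frac{|{}^1T_\ell|}{n} = \frac{\rho_\ell}{n},
\]
so $T$ satisfies \eqref{Pconditions} with $v=w=1$, trivially and with exact equality. No Ore argument or feasible sequence is needed, since $M$ is empty: there are no rows or columns of $M$, so Lemma~\ref{claim1} collapses to this single cell.

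Next, apply the inductive step in the proof of Lemma~\ref{claim2} verbatim with $r=s=0$. At each stage the last row, of weight $n-v+1$, is split using Nash-Williams' lemma applied to the two laminar families $\mathscr A=\{\mathbb P_\ell\}$ and $\mathscr B=\{{}^j\mathbb P\}$. This produces a row of weight $1$ and a row of weight $n-v$. The splitting of columns is handled symmetrically. The verification that \eqref{Qconditions} holds uses only \eqref{Zeqn}, the transitivity of $\approx$, and the fact that $x\approx y$ implies $x/m\approx y/m$. None of these depend on $r,s\ge 1$, so the induction goes through unchanged.

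After reaching $v=w=n$ we obtain an $n\times n$ multi-array $P$ in which every cell has size $1\cdot 1$, so $P$ is a genuine array. It satisfies $|P_\ell|=\rho_\ell$, and $|P^i_\ell|\approx \rho_\ell/n$ and $|{}^jP_\ell|\approx\rho_\ell/n$ for every row and column. These are exactly the defining conditions of an equitable $\brho$-Latin square of order $n$.

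The only point requiring care is that the standing hypotheses ($r,s\in\mathbb N$, $\min\{r,s\}<n$) formally exclude $r=s=0$. So I would check that no step of Lemma~\ref{claim2} divides by or indexes with $r$ or $s$ in a way that breaks. The main potential obstacle is the line $|Q^{r+v+1}_\ell|\approx q^{r+v+1}\rho_\ell/n$. This needs $|P^{r+v}_\ell| - \lfloor\text{or}\rceil\, |P^{r+v}_\ell|/(n-v+1)$ to be within rounding of $(n-v)\rho_\ell/n$. I would verify this directly: since $|P^{r+v}_\ell|/(n-v+1)\approx \rho_\ell/n$, write
\[
|P^{r+v}_\ell| = (n-v+1)\lfloor\rho_\ell/n\rfloor + e,
\qquad 0\le e\le n-v+1,
\]
noting that $e=0$ when $n\mid\rho_\ell$. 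Then check that removing $\lfloor\cdot\rfloor$ or $\lceil\cdot\rceil$ of $|P^{r+v}_\ell|/(n-v+1)$ leaves a quantity between $(n-v)\lfloor\rho_\ell/n\rfloor$ and $(n-v)\lceil\rho_\ell/n\rceil$. This is a short case check, and it is the same computation already implicit in the paper's proof.
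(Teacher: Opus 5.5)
Your proposal is correct and is exactly the paper's argument: with $r=s=0$, the single cell holding $\rho_\ell$ copies of each $\ell$ is the trivial base case, and the splitting induction of Lemma~\ref{claim2} is then run up to $v=w=n$. One wording point: the bound you actually propose to check, $(n-v)\lfloor\rho_\ell/n\rfloor\le|Q^{v+1}_\ell|\le(n-v)\lceil\rho_\ell/n\rceil$, is exactly what \eqref{Qconditions} requires, whereas being ``within rounding of $(n-v)\rho_\ell/n$'' is stronger and can fail (for $n=4$, $\rho_\ell=6$, if rows $1$ and $2$ each receive one copy, the weight-$2$ row gets $4$ copies, which is not within rounding of $3$), so state the target in the weaker form.
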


We next state a result that extends Hall’s theorem and its generalization by Goldwasser et al. \cite{MR3280683} to equitable $\brho$-Latin rectangles.
\begin{corollary}\label{main1}
   An $r \times n$ equitable $\boldsymbol{\rho}$-Latin rectangle $M$ extends to an $n \times n$ equitable $\boldsymbol{\rho}$-Latin square  if and only if the following conditions are met.
\begin{align*}
     &\dfrac{\rho_\ell - |M_\ell|}{n-r} \approx \dfrac{\rho_\ell}{n} &\text{for } \ell\in [k],\\
     &n|J| \geq \sum_{\ell\in[k]}\Big(|J|\roundup{\dfrac{\rho_\ell}{n}} - \min\left\{n\roundup{\dfrac{\rho_\ell}{n}} - \rho_\ell, \eta_J(\ell)\right\} \Big) &\text{for } J\subseteq[s].
\end{align*}
\end{corollary}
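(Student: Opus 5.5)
Corollary~\ref{main1} is Theorem~\ref{main2} specialized to $r_1=r<n$ and $r_2=s=n$. The plan is to show that, in this case, the existence of a feasible sequence satisfying \eqref{MainNcond} and \eqref{ncond2R} is equivalent to the two displayed conditions. Throughout, write $c_\ell=\roundup{\rho_\ell/n}$ and $d_\ell=\rounddown{\rho_\ell/n}$.

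\textbf{The column coordinate.} For $t=2$ we have $n-r_2=0$. The last feasibility equation forces $\sum_\ell a_{\ell2}=0$, so $a_{\ell2}=0$ for every $\ell$. The lower bound $a_{\ell2}\ge \rho_\ell-nc_\ell\le 0$ is then automatic. The upper bound reads $0\le \min\{nc_\ell,\rho_\ell+\eta_{[n]}(\ell)\}-nc_\ell$, i.e. $\rho_\ell+\eta_{[n]}(\ell)\ge nc_\ell$. This holds because $M$ is an $r\times n$ equitable rectangle. Columns of $M$ have only $r<n$ cells, so I expect to use the earlier observation from the necessity proof that $|\mathscr U_{[s]}(\ell)|\le\eta_{[s]}(\ell)$. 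Rather than re-deriving this, I will check it directly: the saturated columns number at most $\rho_\ell - n d_\ell$, so the unsaturated ones number at least $n-(\rho_\ell-nd_\ell)=nc_\ell-\rho_\ell$ for free $\ell$, and the inequality is trivial for forced $\ell$.

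With $a_{\ell2}=0$ we get $f_2(\ell)=nc_\ell-\rho_\ell$. Condition \eqref{ncond2R} for $t=2$ and $Z=J\subseteq[n]$ is then exactly the second displayed inequality of the corollary. Condition \eqref{MainNcond} for $t=2$ is trivial since $n-r_2=0$.

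\textbf{The row coordinate.} For $t=1$, the mixed constraint gives
\[
a_{\ell1}\ge \rho_\ell-|M_\ell|-(n-r)d_\ell .
\]
Combining this with \eqref{MainNcond} for $t=1$, namely $\rho_\ell-|M_\ell|\ge (n-r)d_\ell$, I aim to show that the system for $t=1$ is solvable if and only if $(n-r)d_\ell\le \rho_\ell-|M_\ell|\le (n-r)c_\ell$. That range is precisely the first displayed condition.

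Necessity of the upper end needs a direct argument: each of the last $n-r$ rows holds at most $c_\ell$ copies of $\ell$, and this also follows from the necessity part of Theorem~\ref{main2}. For sufficiency I will take
\[
a_{\ell1}=\rho_\ell-|M_\ell|-(n-r)d_\ell .
\]
Its sum over $\ell$ is $n^2-rn-(n-r)\sum_\ell d_\ell$, which is the required total. The remaining checks are as follows.
\begin{itemize}
\item The lower bound $a_{\ell1}\ge\rho_\ell-rc_\ell-(n-r)d_\ell$ becomes $|M_\ell|\le rc_\ell$, which is true.
\item The upper bound needs $\rho_\ell-|M_\ell|\le \min\{nc_\ell,\rho_\ell+\eta_{[r]}(\ell)\}-rc_\ell$. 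This follows from $\rho_\ell-|M_\ell|\le(n-r)c_\ell$ and from $rc_\ell-|M_\ell|\le\eta_{[r]}(\ell)$, since each unsaturated row misses at most one copy when $c_\ell-d_\ell\le 1$.
\end{itemize}

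\textbf{The main obstacle.} The hardest step is verifying \eqref{ncond2R} for $t=1$. With this choice, $f_1(\ell)=rc_\ell-|M_\ell|$, which equals the number of unsaturated rows, i.e. $\eta_{[r]}(\ell)$. Hence $\min\{f_1(\ell),\eta_I(\ell)\}=\eta_I(\ell)$. The inequality then reduces to
\[
\sum_\ell \eta_I(\ell)\ge |I|\Bigl(\sum_\ell c_\ell-n\Bigr),
\]
and this holds row by row. Each row $i$ of $M$ is a full row of length $n$, so it has exactly $\sum_\ell c_\ell-n$ unsaturated symbols. Together these steps give both directions of the corollary.
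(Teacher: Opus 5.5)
Your route is the paper's: specialize Theorem~\ref{main2} to $s=n$, force $a_{\ell2}=0$, choose $a_{\ell1}=\rho_\ell-|M_\ell|-(n-r)d_\ell$, and use $f_1(\ell)=rc_\ell-|M_\ell|=\eta_{[r]}(\ell)$ so that the row condition holds with equality row by row. The step that fails as argued is the $t=2$ upper bound $\rho_\ell+\eta_{[n]}(\ell)\ge nc_\ell$. You claim it is automatic for every $r\times n$ equitable rectangle because at most $\rho_\ell-nd_\ell$ columns are $\ell$-saturated. That count holds for the full columns of a completed square, which is why it appears in the necessity proof. But the columns of $M$ have only $r<n$ cells and no lower bound. (The printed definition attaches the row lower bound to $r=n$ and the column lower bound to $s=n$, evidently a slip. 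The paper's proof of this corollary and your own row argument both put the bound $d_\ell$ on the full rows of an $r\times n$ rectangle and none on its columns.)

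For example, take $n=4$, $r=3$, $\brho=(5,4,4,3)$ and rows $(1,1,2,3)$, $(1,2,3,4)$, $(3,1,4,2)$. One checks that $M$ is an equitable $\brho$-Latin rectangle in which symbol $1$ is saturated in columns $1$ and $2$, so $\eta_{[4]}(1)=2<3=4c_1-\rho_1$. Here $\rho_\ell-|M_\ell|=1$ for every $\ell$, so even the first displayed condition holds. Yet $M$ does not extend: the single remaining copy of $1$ cannot supply both columns $3$ and $4$.

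So in the sufficiency direction this inequality must be derived from the hypotheses, and it is exactly the instance $J=[n]$ of the second displayed condition. Since $\sum_\ell\rho_\ell=n^2$, we have $\sum_\ell(nc_\ell-\rho_\ell)=n\bigl(\sum_\ell c_\ell-n\bigr)$. That instance therefore reads $\sum_\ell\min\{nc_\ell-\rho_\ell,\eta_{[n]}(\ell)\}\ge\sum_\ell(nc_\ell-\rho_\ell)$. Each minimum is at most $nc_\ell-\rho_\ell$, so this forces $\eta_{[n]}(\ell)\ge nc_\ell-\rho_\ell$ for every $\ell$. (In the example above this instance indeed fails, with right-hand side $17>16$.) In the necessity direction the inequality comes for free from the feasible sequence supplied by Theorem~\ref{main2}. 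With this one substitution your argument is complete. The paper's own reduction to a system in $a_\ell$ alone also omits this $t=2$ constraint without comment, so the same repair is needed there.
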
 
\begin{proof}

    In Theorem \ref{main2}, let $s=n$. Then  \eqref{ncond2R}  for $J\subseteq [s]$ reduces exactly to the second condition above, and  \eqref{MainNcond} becomes $\rho_\ell - |M_\ell| \ge (n-r)\left\lfloor \rho_\ell/n \right\rfloor$ for $\ell\in[k]$,  which is precisely the lower bound in the first condition of this corollary.  As $|M_\ell|\leq r\roundup{\rho_\ell/n}$, the existence of a feasible sequence when $s=n$ reduces to the existence of a sequence $\{(a_\ell,0)\}_{\ell\in[k]}$ such that for $\ell\in[k]$, the following hold.
    \begin{align*}
    \begin{cases}
        a_\ell \geq \rho_\ell - |M_\ell| - (n-r)\rounddown{\dfrac{\rho_\ell}{n}},\vspace{1mm}\\
        a_\ell \leq \min\Big\{n\roundup{\dfrac{\rho_\ell}{n}}, \rho_\ell + \eta_{[r]}(\ell)\Big\} - r\roundup{\dfrac{\rho_\ell}{n}} - (n-r)\rounddown{\dfrac{\rho_\ell}{n}},\vspace{1mm}\\
        \sum_{\ell\in[k]}a_\ell = (n-r)\Big(n - \sum_{\ell\in[k]}\rounddown{\dfrac{\rho_\ell}{n}}\Big).
    \end{cases}
    \end{align*}
    We claim that the sequence $\{(a_\ell,0)\}_{\ell\in[k]}$ such that $a_\ell = \rho_\ell - |M_\ell| - (n-r)\rounddown{\rho_\ell/n}$ for $\ell\in[k]$ is feasible. 
    By the upper bound on $\rho_\ell-|M_\ell|$ given by the first necessary condition of the corollary, we have that
    \begin{align*}
        \rho_\ell - |M_\ell| \leq (n-r)\roundup{\dfrac{\rho_\ell}{n}},
    \end{align*}
    and since $|M_\ell|\geq r\roundup{\rho_\ell/n} - \eta_{[r]}(\ell)$, we have that
    \begin{align*}
        \rho_\ell - |M_\ell| \leq \rho_\ell + \eta_{[r]}(\ell) - r\roundup{\dfrac{\rho_\ell}{n}}.
    \end{align*}
    Moreover, we have that
    \begin{align*}
        \sum_{\ell\in[k]}\Big(\rho_\ell - |M_\ell| - (n-r)\rounddown{\dfrac{\rho_\ell}{n}}\Big) = (n-r)\Big(n-\sum_{\ell\in[k]}\rounddown{\dfrac{\rho_\ell}{n}}\Big).
    \end{align*}
    Thus, this sequence is feasible.

    Finally, note that for $I\subseteq[r]$ and $\ell\in[k]$, we have that $\eta_I(\ell)\leq \eta_{[r]}(\ell) = r\roundup{\rho_\ell/n} - |M_\ell|$. Hence, given the feasible sequence defined above, we have for $\ell\in[k]$ that
    \begin{align}\label{corminimumforcond2}
        \min\Big\{r\Big(\roundup{\dfrac{\rho_\ell}{n}} - \rounddown{\dfrac{\rho_\ell}{n}}\Big) + n\rounddown{\dfrac{\rho_\ell}{n}} - \rho_\ell + a_\ell, \eta_I(\ell)\Big\} &\geq \min\Big\{r\roundup{\dfrac{\rho_\ell}{n}} - |M_\ell|, \eta_I(\ell)\Big\} \geq \eta_I(\ell).
    \end{align}
    In any equitable $r\times n$ $\brho$-Latin rectangle, we have that $\sum_{\ell\in[k]}|M_\ell^i| = n$ and that $|M_\ell^i|\geq \rounddown{\rho_\ell/n}$, and so $\roundup{\rho_\ell/n}-\eta_i(\ell) = |M_\ell^i|$. It follows that for $I\subseteq[r]$ in any equitable $r\times n$ $\brho$-Latin rectangle,
    \begin{align*}
        n|I| = \sum_{i\in I}\sum_{\ell\in[k]}|M_\ell^i| = \sum_{i\in I}\sum_{\ell\in[k]}\Big(\roundup{\dfrac{\rho_\ell}{n}}-\eta_i(\ell)\Big) = \sum_{\ell\in[k]}\Big(|I|\roundup{\dfrac{\rho_\ell}{n}} - \eta_I(\ell)\Big).
    \end{align*}
    By \eqref{corminimumforcond2}, this is equivalent to condition \eqref{ncond2R} of Theorem \ref{main2}, and so the corollary follows directly from our main result.
\end{proof}

The complexity of our main result arises from the presence of free symbols. When every symbol is forced, the necessary conditions simplify considerably. The next two corollaries follow by imposing this restriction on Theorems~\ref{main1} and~\ref{main2}. In particular, setting $\rho_1=\rho_2=\cdots=\rho_k=n$ in the next corollary yields Hall’s theorem.
\begin{corollary} \label{cor:ndivrhoihall}
    When $n \mid \rho_\ell$ for $\ell\in[k]$, any $r\times n$ equitable $\brho$-Latin rectangle can be extended to an $n\times n$ equitable $\brho$-Latin square.
\end{corollary}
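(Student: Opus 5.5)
We deduce Corollary~\ref{cor:ndivrhoihall} from Corollary~\ref{main1}, which characterizes extendability of an $r\times n$ equitable $\brho$-Latin rectangle by two conditions. The plan is to check that both conditions hold automatically when every symbol is forced, i.e.\ $n\mid\rho_\ell$ for all $\ell\in[k]$.

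First, we observe that all symbols forced means $\roundup{\rho_\ell/n}=\rounddown{\rho_\ell/n}=\rho_\ell/n$. By definition $\eta_J(\ell)=0$ for every $J$ and every forced $\ell$, so all the $\eta$ terms disappear.

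Second, we verify the count condition $(\rho_\ell-|M_\ell|)/(n-r)\approx\rho_\ell/n$. Since $s=n$, the definition of an equitable rectangle gives $|\leftindex^j M_\ell|\ge\rounddown{\rho_\ell/n}$ for every column $j$. Its row upper bound gives $|M_\ell^i|\le\roundup{\rho_\ell/n}=\rho_\ell/n$ for every row $i$. We claim that in fact $|M_\ell^i|=\rho_\ell/n$ for each row. Each row of $M$ has exactly $n$ cells, so $\sum_\ell|M_\ell^i|=n$, while $\sum_\ell\rho_\ell/n=n$. Combined with the upper bound $|M_\ell^i|\le\rho_\ell/n$, equality is forced termwise. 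Hence $|M_\ell|=r\rho_\ell/n$, and so $(\rho_\ell-|M_\ell|)/(n-r)=\rho_\ell/n$ exactly. This gives the first condition. The case $r=n$ is excluded by the standing hypothesis $\min\{r,s\}<n$.

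Third, we verify the Hall-type condition. With $\eta_J(\ell)=0$, its right-hand side is
\[
\sum_{\ell\in[k]}\Bigl(|J|\frac{\rho_\ell}{n}-\min\{0,0\}\Bigr)=\frac{|J|}{n}\sum_{\ell\in[k]}\rho_\ell=|J|\,n,
\]
so the inequality $n|J|\ge n|J|$ holds with equality for every $J\subseteq[s]$.

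Both conditions of Corollary~\ref{main1} therefore hold, and the extension exists. The only point requiring care is the termwise equality argument in the second step, which uses the row-sum identity $\sum_\ell|M_\ell^i|=n$ valid because $s=n$. Everything else is immediate substitution.
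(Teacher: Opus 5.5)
Your proof is correct and follows the same route as the paper, which deduces the result from Corollary~\ref{main1} by observing that both of its conditions hold trivially when every symbol is forced. Your termwise-equality argument supplies the detail behind the first condition that the paper leaves implicit: each full row has $n$ entries, $\sum_{\ell}\rho_\ell/n=n$, and $|M^i_\ell|\le\rho_\ell/n$, so equality holds termwise and $|M_\ell|=r\rho_\ell/n$.
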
 
\begin{proof}
As $n \mid \rho_\ell$, both necessary conditions of Corollary \ref{main1} hold trivially. Hence, this result follows directly from Corollary \ref{main1}.
\end{proof}

If we set $\rho_1=\rho_2=\cdots=\rho_k=n$, the condition of the next result reduces to $|M_\ell| \ge r+s-n$ for $\ell\in[k]$. Hence, we recover Ryser’s theorem.
\begin{corollary} \label{cor:ndivrhoiryser}
    When $n\mid\rho_\ell$ for $\ell\in[k]$, an $r\times s$ equitable $\brho$-Latin rectangle $M$ can be extended to an $n\times n$ equitable $\brho$-Latin square if and only if
    \begin{align*}
        \max\Big\{n-r, n-s\Big\}\dfrac{\rho_\ell}{n} \leq \rho_\ell - |M_\ell| \leq (2n-r-s)\dfrac{\rho_\ell}{n} \quad \text{for } \ell\in[k].
    \end{align*}
\end{corollary}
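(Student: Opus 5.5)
We will derive Corollary~\ref{cor:ndivrhoiryser} directly from Theorem~\ref{main2}, using that every symbol is forced. Write $\rho_\ell = c_\ell n$, so that $\lceil\rho_\ell/n\rceil=\lfloor\rho_\ell/n\rfloor=c_\ell$. By the convention for forced symbols, $\eta_I(\ell)=\eta_J(\ell)=0$ for all $I,J,\ell$.

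First we simplify the definition of a feasible sequence.
\begin{itemize}
\item The lower bound in the first line becomes $a_{\ell t}\ge \rho_\ell - n c_\ell = 0$.
\item The upper bound becomes $\min\{nc_\ell,\rho_\ell\}-nc_\ell=0$.
\end{itemize}
Hence $a_{\ell 1}=a_{\ell 2}=0$ for every $\ell$, so the only candidate sequence is the zero sequence.

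We then check the remaining feasibility conditions for the zero sequence.
\begin{itemize}
\item The sum condition reads $0=(n-r_t)(n-\sum_\ell c_\ell)$. This holds because $\sum_\ell c_\ell n=\sum_\ell\rho_\ell=n^2$, so $\sum_\ell c_\ell=n$.
\item The third condition becomes $0\ge \rho_\ell-|M_\ell|-(2n-r-s)c_\ell$, which is exactly the upper bound $\rho_\ell-|M_\ell|\le(2n-r-s)\rho_\ell/n$.
\end{itemize}
So a feasible sequence exists if and only if this upper inequality holds for all $\ell$.

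Next we treat the two conditions of Theorem~\ref{main2}.
\begin{itemize}
\item Condition \eqref{MainNcond}, taken for $t=1,2$, gives $\rho_\ell-|M_\ell|\ge (n-r_t)c_\ell$ for both $t$. This is the lower bound $\max\{n-r,n-s\}\rho_\ell/n\le \rho_\ell-|M_\ell|$.
\item In condition \eqref{ncond2R} every $\min$ term vanishes, since $\eta_Z(\ell)=0$. The condition becomes $n|Z|\ge |Z|\sum_\ell c_\ell=n|Z|$, which holds trivially.
\end{itemize}

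Combining the two parts, the hypotheses of Theorem~\ref{main2} are equivalent to the displayed two-sided inequality, and the corollary follows in both directions.

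The argument is a direct specialization, and the only point needing care is the first simplification. The two one-sided bounds on $a_{\ell t}$ must be seen to collapse to $a_{\ell t}=0$, which requires applying the forced-symbol convention $\eta=0$ inside the $\min$ of the upper bound. With $\eta=0$, the upper bound does not reduce to a strict restriction but to $\min\{nc_\ell,nc_\ell\}-nc_\ell=0$.
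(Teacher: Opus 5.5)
Your proposal is correct and follows the same route as the paper. You specialize Theorem~\ref{main2} to the all-forced case, where the zero sequence is the feasible candidate and its third feasibility condition is exactly the upper bound, i.e.\ $|M_\ell|\ge (r+s-n)\rho_\ell/n$; condition \eqref{MainNcond} then gives the lower bound, and \eqref{ncond2R} is trivial. Your observation that the bounds force $a_{\ell t}=0$ usefully makes explicit why the upper bound is also necessary, which the paper leaves implicit. The $\eta=0$ convention is not actually needed for that step, since $\rho_\ell+\eta_{[r_t]}(\ell)\ge n\lceil\rho_\ell/n\rceil$ holds anyway.
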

\begin{proof}
    Observe that since $n\mid\rho_\ell$ for $\ell\in[k]$, there are no free colors. Thus as $|M_\ell| \geq (r+s-n)\rho_\ell/n$, the sequence $\{(0, 0)\}_{\ell\in[k]}$ is feasible in this case. Moreover, condition \eqref{ncond2R} holds trivially. Hence, this result follows directly from Theorem \ref{main2}.
\end{proof}

    Let $L_{\brho}(n)$ denote the total number of equitable $n\times n$ $\brho$-Latin squares. When $\brho=(n,n\dots,n)$, $L_{\brho}(n)$ counts the total number of $n\times n$ Latin squares and we know that $(n!)^{2n}/n^{n^2} \leq L_{(n,n,\dots,n)}(n) \leq \Pi_{k=1}^n(k!)^{n/k}$ \cite{courseincomb}. However, there are not yet known bounds on $L_{\brho}(n)$ for more general $\brho$. We propose the following. 
        \begin{question}
            Find good upper and lower bounds for $L_{\brho}(n)$.
        \end{question} 

A {\it Sudoku square} is a $9 \times 9$ Latin square  whose cells are partitioned into nine $3 \times 3$ regions, each containing exactly one copy of each symbol. More generally, let $a,b \in \mathbb{N}$ with $ab = n$ and $\boldsymbol{\rho} = (\rho_1,\dots,\rho_k)$ satisfying $\sum_{\ell \in [k]} \rho_\ell = n^2$. An {\it equitable $(a,b)$-Sudoku $\boldsymbol{\rho}$-Latin square} is an $n \times n$ equitable $\boldsymbol{\rho}$-Latin square whose cells are partitioned into $n$ distinct $a \times b$ regions, each containing either $\lfloor \rho_\ell/n \rfloor$ or $\lceil \rho_\ell/n \rceil$ copies of symbol $\ell \in [k]$. An example of an equitable $(3,4)$-Sudoku $\boldsymbol{\rho}$-Latin square with 
$\boldsymbol{\rho}: = (27,14,24,9,24,12,12,12,10)$ is shown in Figure~\ref{fig:Gerechteex}.    We propose extending Theorem~\ref{main2} to equitable $(a,b)$-Sudoku $\boldsymbol{\rho}$-Latin squares.

\begin{figure}[h]
    \centering
    \begin{tabular}{|cccc|cccc|cccc|}
        \hline
        1 & 2 & 3 & 1 & 4 & 5 & 6 & 3 & 5 & 7 & 8 & 9\\
        4 & 5 & 6 & 3 & 5 & 7 & 8 & 9 & 1 & 2 & 3 & 1\\
        5 & 7 & 8 & 9 & 1 & 2 & 3 & 1 & 4 & 5 & 6 & 3\\
        \hline
        2 & 3 & 1 & 1 & 5 & 6 & 3 & 4 & 7 & 8 & 9 & 5\\
        5 & 6 & 3 & 1 & 7 & 8 & 2 & 5 & 2 & 3 & 1 & 1\\
        7 & 8 & 9 & 5 & 2 & 3 & 1 & 1 & 5 & 6 & 3 & 1\\
        \hline
        3 & 1 & 1 & 2 & 6 & 3 & 4 & 5 & 8 & 9 & 5 & 7\\
        6 & 3 & 4 & 5 & 8 & 9 & 5 & 7 & 3 & 1 & 1 & 2\\
        8 & 9 & 5 & 7 & 3 & 1 & 1 & 2 & 6 & 3 & 4 & 5\\
        \hline
        1 & 1 & 2 & 3 & 3 & 4 & 5 & 6 & 2 & 5 & 7 & 8\\
        3 & 1 & 5 & 6 & 9 & 5 & 7 & 8 & 1 & 1 & 2 & 3\\
        9 & 5 & 7 & 8 & 1 & 1 & 2 & 3 & 3 & 4 & 5 & 6\\
        \hline
    \end{tabular}
    \caption{An equitable $(3,4)$-Sudoku $\boldsymbol{\rho}$-Latin square}
    \label{fig:Gerechteex}
\end{figure}

\begin{question}
Determine the conditions under which a given equitable $\boldsymbol{\rho}$-Latin rectangle can be extended to an equitable $(a,b)$-Sudoku $\boldsymbol{\rho}$-Latin square.
\end{question}
    \noindent To the best of our knowledge, this problem is still open for the case where $\brho=(n,\dots,n)$.

A {\it Gerechte framework} is a partition of an $n \times n$ array into $n$ regions $R_1,\dots,R_n$, each containing $n$ cells. It is equitably $\boldsymbol{\rho}$-realizable if there exists an equitable $\boldsymbol{\rho}$-Latin square $M$ on  $[k]$ such that each  $\ell \in [k]$ occurs either $\lfloor \rho_\ell/n \rfloor$ or $\lceil \rho_\ell/n \rceil$ times in each region $R_i$ for $i\in[n]$. In this case, $M$ is an equitable $\boldsymbol{\rho}$-realization, and the pair of framework and $M$ is an equitable $\boldsymbol{\rho}$-Gerechte design. When $\boldsymbol{\rho}=(n,\dots,n)$, we drop the qualifier, and a Sudoku square is a Gerechte design with $n=9$ and $3 \times 3$ regions \cite{Courtiel_2011, MR2408485}. It is nontrivial that any Gerechte framework for which each region is either an $s\times t$ or a $t\times s$ rectangle has a realization \cite{Courtiel_2011}. We propose the following.
\begin{question}
For a given $\brho = (\rho_1,\rho_2,\dots,\rho_k)$ where $\sum_{\ell\in[k]}\rho_\ell = n^2$, which Gerechte frameworks are equitably $\brho$-realizable?
\end{question}

 Gerechte designs were motivated by agricultural experiments using Latin squares, where distinct symbols represented experimental parameters \cite{MR2408485}. The Gerechte framework ensures fair testing despite variations in soil across a field, and orthogonality accounts for the influence of one experiment on the next. Beyond agriculture, Gerechte designs have applications in   projective geometry and coding theory. Bailey, Cameron, and Connelly provide a clear overview of both the motivations and related mathematical structures \cite{MR2408485}.

\bibliographystyle{plain}
\bibliography{Latinizedryser}
\end{document}